\newtheorem{theorem}{Theorem}[section]
\newtheorem{proposition}{Proposition}[section]
\newtheorem{rem}{Remark}[section]
\newtheoremstyle{remark}            
 {0.5\topsep}                       
 {0.5\topsep}                       
 {\upshape\small}                   
 {}                                 
 {\itshape}                         
 {}                                 
 {.5em}                             
 {}                                 
 {}                                 
\theoremstyle{remark}               
\DeclareMathOperator{\matchOp}    {M}
\DeclareMathOperator{\perfMatchOp}{PM}
\DeclareMathOperator{\inOp}       {in}
\DeclareMathOperator{\outOp}      {out}
\DeclareMathOperator{\sucOp}      {succ}
\DeclareMathOperator{\conv}       {conv}
\DeclareMathOperator{\neighOp}    {N}
\DeclareMathOperator{\critOp}     {crit}
\newcommand{\circOp}            {\circlearrowright}
\newcommand {\crit}[1]          {\critOp({#1})}
\newcommand{\family}[1]         {\mathscr{#1}}
\newcommand {\graphstar}[2]     {\delta_{#2}(#1)}
\newcommand{\instar}[2]         {\delta^{\inOp}_{#2}(#1)}
\newcommand{\mat}[1]            {\mathbf{#1}}
\newcommand{\N}                 {\mathbb{N}}
\newcommand{\nb}[1]             {\neighOp({#1})}
\newcommand{\nonNb}[1]          {\overline{\neighOp}({#1})}
\newcommand{\orb}[1]            {\polytope{O}_{#1}}
\newcommand{\orbisack}[1]       {\polytope{O}_{#1}}
\newcommand{\orbverts}[1]       {\set{X}_{#1}}
\newcommand{\outstar}[2]        {\delta^{\outOp}_{#2}(#1)}
\newcommand{\polyMatch}[1]      {\polytope{P}_{\matchOp}({#1})}
\newcommand{\polyPerfMatch}[1]  {\polytope{P}_{\perfMatchOp}({#1})}
\newcommand{\polytope}[1]       {\mathrm{#1}}
\newcommand{\R}                 {\mathbb{R}}
\newcommand{\RNonNeg}           {\mathbb{R}_+}
\newcommand{\set}[1]            {\mathcal{#1}}
\newcommand{\setdef}[2]         {\{#1\;|\;#2\}}
\newcommand{\setDef}[2]         {\setdef{#1}{#2}}
\newcommand{\ssmin}             {\smallsetminus}
\newcommand{\successors}[2]     {\sucOp_{#2}({#1})}
\newcommand{\unitvec}[1]        {\mathbb{e}^{#1}}
\newcommand{\zerovec}[1]        {\mathbb{O}_{#1}}
\newcommand{\zeroVec}           {\zerovec{}}
\newcommand{\oneVec}            {\mathbb{1}}
\newcommand{\st}                {^{\star}}
\newcommand{\scalProd}[2]       {\langle{#1}\,,\,{#2}\rangle}
\newcommand{\transpose}[1]      {{#1}^{T}}
\renewcommand{\vec}[1]          {\boldsymbol{#1}}
\def\clap#1{\hbox to 0pt {\hss#1\hss}}
\title{Finding Descriptions of Polytopes via Extended Formulations and Liftings}
\author{Volker Kaibel and Andreas Loos}
\date{\today}
\begin{document}

\maketitle

\begin{abstract}
 We describe a technique to obtain linear descriptions for polytopes from extended formulations. The simple idea is to first define a suitable lifting function and then to find linear constraints that are valid for the polytope and guarantee lifted points to be contained in the extension. We explain the technique at an example from the literature (matching polytopes), obtain new simple proofs of results on path-set polytopes and small-cliques polytopes, and finally exploit the technique in order to derive linear descriptions of orbisacks, which are special Knapsack polytopes arising in the context of symmetry breaking in integer programming problems. 
\end{abstract}

 \section{Introduction}

Describing polytopes that encode combinatorial problems by means of  systems of linear equations and inequalities is a crucial topic in Combinatorial Optimization, because this approach, known as \emph{Polyhedral Combinatorics}, makes combinatorial optimization problems accessible to linear programming techniques. While the Weyl-Minkowski Theorem guarantees that for every polytope (i.e., the convex hull of a finite set of points)  such a description \emph{exists}, it can be quite hard to actually \emph{find} some. Sometimes, it is much easier to derive a linear description of some higher dimensional polyhedron that can be projected to the polytope in question by some linear (or affine) map. Such a  description, known as \emph{extended formulation} (see, e.g., \cite{VW10,CCZ10,Kai11}), can be used instead of the original polytope. But sometimes, extended formulations can also be exploited in order to find descriptions in the original spaces.

The classical method to do this is by finding a generating set of the \emph{projection cone}. In order to explain this, let us look at the (in fact, not really restrictive) case of a polytope $\polytope{P}\subseteq\R^n$ that is the orthogonal projection $\polytope{P}=\setDef{x\in\R^n}{(x,y)\in\polytope{Q}\text{ for some }y\in\R^q}$ of a polyhedron~$Q\subseteq\R^n\times\R^q$. For a description $\polytope{Q}=\setDef{(x,y)\in\R^n\times\R^q}{Ax+By\le b}$ of~$\polytope{Q}$ by linear inequalities (with $A\in\R^{m\times n}$, $B\in\R^{m\times q}$, and $b\in\R^m$), the polyhedral cone
\begin{equation*}
	\polytope{C}=\setDef{\lambda\in\R_+^m}{\transpose{\lambda}B=\zeroVec}
\end{equation*} 
is called the \emph{projection cone}. If~$\Lambda\subseteq \polytope{C}$ is a finite set of generators of~$\polytope{C}$ (i.e., every $\lambda\in \polytope{C}$ can be written as a linear combination of vectors from~$\Lambda$ with nonnegative coefficients), then
\begin{equation*}
	(\transpose{\lambda}A)x \le \transpose{\lambda}b\qquad\forall\lambda\in\Lambda
\end{equation*}
is a system of inequalities describing~$\polytope{P}$ (see, e.g., \cite{CCZ10}). Thus, in order to derive a linear description of some polytope from an extended formulation, it is enough to find a finite set of generators of the associated projection cone, e.g., by determining its extreme rays. In some cases, this method has been applied very successfully. It is worth to note that, while dealing with a projection of a polytope given by linear inequalities is  non-trivial (in general, computing generators of the projection cone is a difficult task), the image of a polytope that is  given as the convex hull  of some set clearly is the convex hull of the projection of that set.

In this paper, we describe an alternative method for deriving linear descriptions of polytopes from extended formulations that we call the \emph{lifting method}. Actually, the method is not  new. It is, e.g.,  a generalization of the method used in~\cite{Schrijver04} in order to deduce  descriptions of   matching polytopes from the descriptions of  perfect matching polytopes (see the proof of Cor.~25.1a in~\cite{Schrijver04}). Our contribution here is meant to first of all draw attention to the method itself (Section~\ref{sec: lifting example}), to demonstrate its capabilities by providing alternative derivations of well-known linear descriptions (of  path-set polytopes in Section~\ref{sec: path set polytopes} and of  small-cliques polytope in Section~\ref{sec: small clique polytopes}), and finally to use the method in order to derive linear descriptions of a special class of Knapsack polytopes, the \emph{orbisacks}, which arise in the context of symmetry breaking in integer programming models. 

Most of the material of this paper can also be found in the PhD-disser\-ta\-tion~\cite{Loos11}.

 \section{The Lifting-Method}
\label{sec: lifting example}

As an introductory example, we deal with we use the derivation of the linear description of the \emph{matching polytope} from the description of the \emph{perfect matching polytope}  (i.e., the convex hulls of the characteristic vectors in $\R^{\set{E}}$ of all respectively of all perfect matchings in a graph~$G=(\set{V},\set{E})$) as given in the proof of Cor.~25.1a in~\cite{Schrijver04}.
We will denote these polytopes by 
\[
 \polyMatch{G}=\conv\setdef{\vec{x}[\set{M}]\in\{0,1\}^{\set{E}}}{\set{M} \text{ matching in }G}
\]
and 
\[
 \polyPerfMatch{G}=\conv\setdef{\vec{x}[\set{M}]\in\{0,1\}^{\set{E}}}{\set{M} \text{ perfect matching in }G}
\]
(where $\vec{x}[\cdot]$ denotes the \emph{characteristic vector} of the set in the brackets, i.e., the 0/1-vector having one-entries exactly at positions indexed by that set).

We fix by $G_1=(\set{V}_1,\set{E}_1)$ and $G_2=(\set{V}_2,\set{E}_2)$ two disjoint copies of~$G$. For a vertex $v\in \set{V}$ or a set $\set{W}\subseteq \set{V}$ of vertices of~$G$ we denote by~$v_1$, $v_2$, $\set{W}_1$, and $\set{W}_2$ the respective copies in~$G_1$ and $G_2$. The graph  $\tilde{G}=(\tilde{\set{V}},\tilde{\set{E}})$ arises from~$G_1$ and~$G_2$ by connecting~$v_1$ to~$v_2$ for each~$v\in \set{V}$. It is easy to see that~$\polyMatch{G}=\polyMatch{G_1}$ is the orthogonal projection of~$\polyPerfMatch{\tilde{G}}$ to the $\set{E}_1$-coordinates.  

In order to describe the method in general, let~$\polytope{Q}\subseteq\R^d$ be a polyhedron whose image under the projection $\sigma:\R^d\rightarrow\R^n$
 is the polytope~$\polytope{P}\subseteq\R^n$. In our example, we have $\polytope{P}=\polyMatch{G}$ and $\polytope{Q}=\polyPerfMatch{\tilde{G}}$.
For the applicability of the method it is crucial to find a suitably described \emph{lifting} $\lambda:\set{R}\rightarrow\R^d$ on a set $\set{R}\subseteq\R^n$ containing~$\polytope{P}$ with
\begin{equation*}
	\sigma(\lambda(\vec{x}))=\vec{x}\quad\text{for all }\vec{x}\in \set{R}\,.
\end{equation*}
 In the matching example, we choose 
\begin{equation*}
	\set{R} =\setDef{\vec{x}\in\R^{\set{E}}}{\vec{x}\ge\zeroVec,x(\graphstar{v}{}))\le 1\text{ for all }v\in \set{V}}
\end{equation*}
(where, as usual, we denote by $\graphstar{v}{}$ the set of edges incident to~$v$, and, for some vector $\vec{x}$, by $x(\cdot)$  the sum of all components of~$\vec{x}$ indexed by elements from the set in the brackets).
Actually, the method can only work if the lifting satisfies $\lambda(\vec{x})\in \polytope{Q}$ for all $\vec{x}\in \polytope{P}$, i.e., the restriction of the lifting  to $\polytope{P}$ is a \emph{section} of the extension. However, this property needs not to be established explicitly, but it rather follows in hindsight if the method works out. At this point, the requirement is only used to guide the search for a promising lifting.  For instance, looking at the vertices of $\polyMatch{G}$ one may find $\lambda:\set{R}\rightarrow\R^{\tilde{\set{E}}}$ with 
\begin{align*}
\lambda(\vec{x})_{\set{E}_1} &= \vec{x}\\
\lambda(\vec{x})_{\set{E}_2} &= \vec{x}\\
\lambda(\vec{x})_{\{v_1,v_2\}} &= 1-x(\graphstar{v}{})
\end{align*}
for all $\vec{x}\in \set{R}$ and $v\in \set{V}$  to be a natural choice for the matching example.

Suppose we have a linear description of $\set{R}$ at hand. In order to find a linear description of~$\polytope{P}$ it then suffices to exhibit a system $\mat{A}\vec{x}\le \vec{b}$ of inequalities valid for~$\polytope{P}$ that is \emph{section enforcing} (with respect to~$Q$, $\sigma$, and~$\lambda$),
 i.e., $\lambda(\vec{x})\in \polytope{Q}$ is satisfied for all $\vec{x}\in \set{R}$ with $\mat{A}\vec{x}\le \vec{b}$. Indeed, in this case we clearly have $\polytope{P}\subseteq\setDef{\vec{x}\in \set{R}}{\mat{A}\vec{x}\le \vec{b}}$ (as $\mat{A}\vec{x}\le \vec{b}$ is valid for~$\polytope{P}$), and the reverse inclusion follows from $\vec{x}=\sigma(s(\vec{x}))$ for all $\vec{x}\in \set{R}$, $s(\vec{x})\in \polytope{Q}$ for all $\vec{x}\in \set{R}$ with $\mat{A}\vec{x}\le \vec{b}$, and $\polytope{P}=\sigma(\polytope{Q})$. 

In case of the matching example, we can find a section enforcing system of valid inequalities for $\polytope{P}=\polyMatch{G}$ as follows, exploiting the fact that $\polytope{Q}=\polyPerfMatch{\tilde{G}}$ equals the set of all $\tilde{\vec{x}}\in\RNonNeg^{\tilde{E}}$ that satisfy
\begin{equation}\label{eq:PMNode}
	\tilde{x}(\graphstar{\tilde{v}}{})=1\quad\text{for all }\tilde{v}\in\tilde{\set{V}}
\end{equation}
and
\begin{equation}\label{eq:PMOddSet}
	\tilde{x}(\graphstar{\tilde{\set{W}}}{})\ge 1\quad\text{for all }\tilde{\set{W}}\subseteq\tilde{\set{V}}, |\tilde{\set{W}}|\text{ odd}\,.
\end{equation}
For $\vec{x}\in \set{R}$ and $\tilde{\vec{x}}=\lambda(\vec{x})$ we have $\tilde{\vec{x}}\ge\zeroVec$ as well as~\eqref{eq:PMNode} by definition. Thus it remains to identify linear inequalities that are valid for~$\polyMatch{G}$ and imply~\eqref{eq:PMOddSet}. In order to accomplish that task, let $\tilde{\set{W}}\subseteq\tilde{\set{V}}$ be any set of odd cardinality, and let
\begin{eqnarray*}
	\set{A} &=& \setDef{v\in \set{V}}{v_1    \in\tilde{\set{W}},v_2\not\in\tilde{\set{W}}}\,,\\
	\set{B} &=& \setDef{v\in \set{V}}{v_1    \in\tilde{\set{W}},v_2    \in\tilde{\set{W}}}\,,\\
	\set{C} &=& \setDef{v\in \set{V}}{v_1\not\in\tilde{\set{W}},v_2    \in\tilde{\set{W}}}\text{ and}\\
\end{eqnarray*}
$\set{D}=\set{V}\setminus(\set{A}\cup \set{B}\cup \set{C})$. 
We have
\begin{eqnarray}\label{eq:success}
	\tilde{x}(\graphstar{\tilde{\set{W}}}{})
	    &\ge& 
		\tilde{x}(\set{A}_1:\set{A}_2) + \tilde{x}(\set{B}_2:\set{A}_1) + \tilde{x}(\set{A}_1:\set{C}_1) + \tilde{x}(\set{A}_1:\set{D}_1)\nonumber \\
		&=& \sum_{v\in \set{A}}\big(1-x(\graphstar{v}{}\big) + x(\graphstar{\set{A}}{})\\
		&=& |\set{A}|-2x(\set{E}[\set{A}])\,,\nonumber
\end{eqnarray}
(where $\graphstar{\cdot}{}$, $(\cdot:\cdot)$, and $\set{E}[\cdot]$ are the sets of edges with exactly one end-node in the set in the brackets, one end-node in the first and one in the second set, and both end-nodes in the set, respectively)
and, similarly, $\tilde{x}(\graphstar{\tilde{\set{W}}}{}) \ge |\set{C}|-2x(\set{E}[\set{C}])$. Hence, \eqref{eq:PMOddSet} holds as soon as  
\begin{equation*}
	x(\set{E}[\set{A}])\le \frac{|\set{A}|-1}{2}
	\quad\text{or}\quad
	x(\set{E}[\set{C}])\le \frac{|\set{C}|-1}{2}
\end{equation*}
is satisfied. Fortunately, since $|\tilde{\set{W}}|$ is odd, $|\set{A}|$ or~$|\set{C}|$ must be odd, therefore the system
\begin{equation}\label{eq:blossom}
	x(\set{E}[\set{S}])\le\frac{|\set{S}|-1}{2}
	\quad\text{for all }\set{S}\subseteq \set{V}, |\set{V}|\text{ odd}
\end{equation} 
is section enforcing and valid for $\polyMatch{G}$, which establishes
\begin{equation*}
	\polyMatch{G}=\setDef{\vec{x}\in \set{R}}{\vec{x} \text{ satisfies~\eqref{eq:blossom}}}\,.
\end{equation*}

As one may see from~\eqref{eq:success}, the success of the method crucially depends on the availability of a lifting that is described in a way exploitable for establishing membership in the extension polyhedron. In the matching example, the lifting~$\lambda$ was an affine map whose defining formulas could be plugged immediately into the linear description of the extension polyhedron $\polyPerfMatch{\tilde{G}}$. In fact, it is not necessary that the lifting is of linear type. In the application to orbisacks in Section~\ref{sec:orbi} the liftings will indeed be only piecewise affine, and in the two applications worked out in the next two sections the liftings will even not be defined by explicit formulas at all.

As for the matching example, in many cases a fruitful way to come up with a useful lifting seems to be to try to find a natural way to lift the vertices of~$\polytope{P}$ into~$\polytope{Q}$ first, and then to try to define a (usable description of a) lifting map on the whole set~$\set{R}$ extending that lifting of the vertices. Often  a lifting of the vertices is rather obvious from combinatorial considerations. It may also be known already from establishing $\polytope{P}\subseteq \sigma(\polytope{Q})$.

  \section{Path Set Polytopes}
\label{sec: path set polytopes}

\def\Qstar{\polytope{Q}_{\circOp}(\tilde{D},\vec{u}^{\star},\vec{\ell}^{\star})}
\def\Qx{\polytope{Q}_{\circOp}(\tilde{D},\vec{u}^{\vec{x}},\vec{\ell}^{\vec{x}})}

In our second example, we derive a linear description of the $s$-$t$-path set polytope 
\[
 \polytope{P}^{s,t}(D)=\conv\setdef{\vec{x}[\set{P}]\in\{0,1\}^{\set{V}}}{\set{P} \text{ is the node set of some }s\text{-}t\text{-path in }D}
\]
of an \emph{acyclic} digraph $D=(\set{V},\set{A})$ with two nodes $s\ne t$, where, for technical reasons, we assume that~$s$ is a source of~$D$.
Our derivation only reproves a result of Vande Vate's \cite{VandeVate89}, whose proof works via analyzing the projection cone (phrased in terms of Benders' cuts) of basically the same extended formulation as we are going to exploit.

Just like in the matching example, we use an extended formulation based on a directed graph $\widetilde{D} = (\widetilde{\set{V}},\widetilde{\set{A}})$ with a node set
 $\widetilde{\set{V}}$
that contains two clone nodes -- now denoted $v^{\inOp}$ and $v^{\outOp}$ -- of each $v\in \set{V}$. 
The arc set is defined as
\[
 \widetilde{\set{A}} = \setdef{(v^{\outOp},w^{\inOp})}{(v,w)\in\set{A}} \;\cup\; \setdef{(v^{\inOp},v^{\outOp})}{v\in\set{V}}  \;\cup\; (t^{\outOp},s^{\inOp}).
\]
Arcs in $\setdef{(v^{\outOp},w^{\inOp})}{(v,w\in\set{A})}$ will be referred to as \emph{real arcs}; see Figure~\ref{fig: faithful sectioning path polytope extension} for an example.
\begin{figure}
 \centering
 \psfrag{sout}{$\scriptscriptstyle s^{\outOp}$} \psfrag{sin}{$\scriptscriptstyle s^{\inOp}$}
 \psfrag{tout}{$\scriptscriptstyle t^{\outOp}$} \psfrag{tin}{$\scriptscriptstyle t^{\inOp}$}
 \psfrag{vout}{$\scriptscriptstyle v^{\outOp}$} \psfrag{vin}{$\scriptscriptstyle v^{\inOp}$}
 \psfrag{s}{$s$}             \psfrag{t}{$t$}
 \psfrag{v}{$v$}
 \includegraphics[width=.7\textwidth]{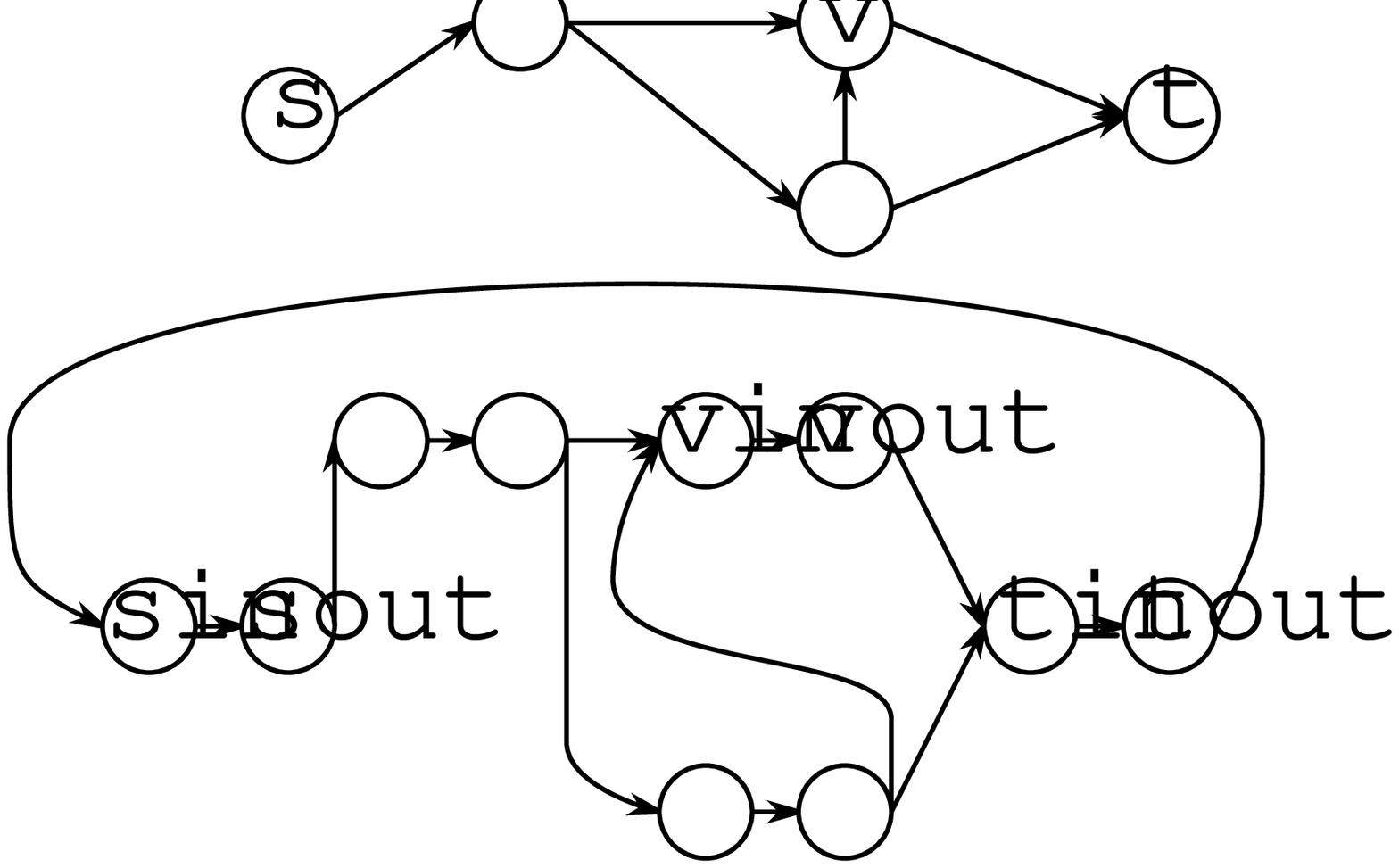}
 \caption{Example digraph $\widetilde{D}$ (bottom) obtained from acyclic digraph $D$ (top).\label{fig: faithful sectioning path polytope extension}}
\end{figure}
The extension we are going to use is  the  polytope 
\begin{multline*}
	\polytope{Q}_{\circOp}(\widetilde{D},\vec{\ell}^{\star},\vec{u}^{\star})
	=\\
	\setDef{\vec{y}\in\R^{\widetilde{\set{A}}}}{y(\outstar{\tilde{v}}{\widetilde{D}})=y(\instar{\tilde{v}}{\widetilde{D}})\text{ for all }\tilde{v}\in\widetilde{\set{V}}, \vec{\ell}^{\star}\le\vec{y}\le\vec{u}^{\star}}
\end{multline*} 
 of circulations in the digraph $\widetilde{D}$ obeying the following capacities:
  \[
   \begin{array}{l@{\;}ll}
    \ell_{(v^{\outOp},w^{\inOp})}^{\star}     &= -\infty & \rdelim\}{2}{0em}[\;for all $(v,w)\in\set{A}$] \\
    u_{(v^{\outOp},w^{\inOp})}^{\star}        &= +\infty & \\
    \ell_{(v^{\inOp},v^{\outOp})}^{\star}     &= 0       & \rdelim\}{2}{0em}[\;for all $v \in\set{V}$] \\
    u_{(v^{\inOp},v^{\outOp})}^{\star}        &= +\infty & \\
    \ell_{(t^{\outOp},s^{\inOp})}^{\star}     &= 1       & \\
    u_{(t^{\outOp},s^{\inOp})}^{\star}        &= 1       &
   \end{array}
  \]
   The vertices of~$\polytope{Q}_{\circOp}(\widetilde{D},\vec{\ell}^{\star},\vec{u}^{\star})$ correspond to the directed cycles in~$\widetilde{D}$, all of which contain~$(t^{\outOp},s^{\inOp})$ (as~$D$ is acyclic). Thus we have $\sigma(\polytope{Q}_{\circOp}(\widetilde{D},\vec{\ell}^{\star},\vec{u}^{\star}))=\polytope{P}^{s,t}(D)$
  with~$\sigma:\R^{\widetilde{\set{A}}}\to\R^{\set{V}}$  defined via $\sigma(\vec{y})_v = y_{(v^{\inOp},v^{\outOp})}$. 
 
In order to define a suitable lifting~$\lambda$,
 observe that for  
\begin{equation*}
	x\ \in\ \set{R}=\setDef{x\in\R_+^{\set{V}}}{x_s=x_t=1}\supseteq\polytope{P}^{s,t}(D)
\end{equation*}
and $y\in\polytope{Q}_{\circOp}(\widetilde{D},\vec{\ell}^{\star},\vec{u}^{\star})$ we have $\sigma(y)=x$ if and only if $y\in\polytope{Q}_{\circOp}(\widetilde{D},\vec{\ell}^{\vec{x}},\vec{u}^{\vec{x}})$  holds with~$\vec{\ell}^{\vec{x}}$ and~$\vec{u}^{\vec{x}}$ being equal to~$\vec{\ell}^{\star}$ and~$\vec{u}^{\star}$ in all components except for $\ell_{(v^{\inOp},v^{\outOp})}^{\vec{x}}=u_{(v^{\inOp},v^{\outOp})}^{\vec{x}}=x_v$ for all $v\in\set{V}$.
Consequently, for all $\vec{x}\in\set{R}$ we define~$\lambda(\vec{x})$ to be an arbitrary point in $\polytope{Q}_{\circOp}(\widetilde{D},\vec{\ell}^{\vec{x}},\vec{u}^{\vec{x}})$ if this set of circulations is nonempty, and (just for formal reasons) to be an arbitrary point in~$\R^{\widetilde{\set{A}}}$ with $\lambda(\vec{x})_{(v^{\inOp},v^{\outOp})}=x_v$ for all $v\in\set{V}$ otherwise.

Clearly, we have $\sigma(\lambda(\vec{x}))=x$ for all $\vec{x}\in\set{R}$, and thus,
it remains to find a system of inequalities that is valid for~$\polytope{P}^{s,t}(D)$ and section enforcing, where the latter condition in this case just means that $\polytope{Q}_{\circOp}(\widetilde{D},\vec{\ell}^{\vec{x}},\vec{u}^{\vec{x}})\ne\varnothing$ holds for  every $\vec{x}\in\set{R}$ satisfying the system. The crucial characterization of the existence of circulations that we exploit here is \emph{Hoffman's Circulation Theorem}.

\begin{theorem}[Hoffman's Circulation Theorem, \cite{Hoffman60}]\label{theo: hoffman's circulation theorem}
In a digraph with lower and upper arc capacities vectors~$\ell$ and~$u$ (with components from $\R\cup\{-\infty,+\infty\}$)  a circulation exists 
 if and only if
\[
 \ell\big(\instar{\set{W}}{}\big) \leq  u\big(\outstar{\set{W}}{}\big)
\]
holds for all node subsets $\set{W}$ (where $\instar{\cdot}{}$ and $\outstar{\cdot}{}$ are the sets of all arcs pointing into and out of, respectively, the set in brackets). 
\end{theorem}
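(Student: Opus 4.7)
The plan is to treat the two directions separately. \textbf{Necessity} is immediate: if $f$ is a feasible circulation, then summing the conservation equation $f(\instar{v}{})=f(\outstar{v}{})$ over all $v\in\set{W}$ makes the contributions of arcs with both endpoints in $\set{W}$ cancel, yielding $f(\instar{\set{W}}{})=f(\outstar{\set{W}}{})$; combining with $\ell\le f\le u$ then gives $\ell(\instar{\set{W}}{})\le f(\instar{\set{W}}{})=f(\outstar{\set{W}}{})\le u(\outstar{\set{W}}{})$.

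For \textbf{sufficiency}, my plan is to reduce to the classical max-flow min-cut theorem. First I would dispose of infinite bounds by truncating each $\ell_a=-\infty$ to $-M$ and each $u_a=+\infty$ to $+M$ for an $M$ larger than the sum of absolute values of all finite capacities; the Hoffman inequalities stay valid under this replacement, so one may assume all bounds are finite. I would then build an auxiliary digraph $\widehat{D}$ by adjoining a super-source $s^{\star}$ and a super-sink $t^{\star}$, replacing every arc $a=(v,w)$ of the original digraph by an arc of residual capacity $u_a-\ell_a$ from $v$ to $w$, adding arcs of capacity $\ell_a$ from $s^{\star}$ to $w$ and from $v$ to $t^{\star}$ (to carry the mandatory lower-bound flow), and adding an infinite-capacity return arc from $t^{\star}$ to $s^{\star}$. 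A standard reduction argument shows that a feasible circulation in $[\ell,u]$ on the original digraph exists exactly when the $s^{\star}$-$t^{\star}$ max flow in $\widehat{D}$ saturates all arcs leaving $s^{\star}$, i.e.\ has value $L:=\sum_a\ell_a$ (after a routine sign adjustment for arcs with $\ell_a<0$).

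By max-flow min-cut it then suffices to verify that every $s^{\star}$-$t^{\star}$ cut in $\widehat{D}$ has capacity at least $L$. Given such a cut $(S^{\star},T^{\star})$ with $s^{\star}\in S^{\star}$, I would set $\set{W}:=S^{\star}\setminus\{s^{\star},t^{\star}\}$ and sum the contributions of the four arc-types crossing the cut according to which of the endpoints of each original arc lies in $\set{W}$. Routine bookkeeping expresses the cut capacity as $L-\ell(\instar{\set{W}}{})+u(\outstar{\set{W}}{})$ plus a non-negative slack contributed by auxiliary $s^{\star}$- and $t^{\star}$-arcs that could be switched to the opposite side of the cut without increasing capacity. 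The Hoffman hypothesis $\ell(\instar{\set{W}}{})\le u(\outstar{\set{W}}{})$ then yields capacity $\ge L$, as required. \textbf{The main obstacle} is precisely this bookkeeping identity: one must track signs carefully for the original arcs (whose cut contribution depends on which endpoint lies on the $s^{\star}$-side) and verify that the auxiliary arcs contribute only non-negative slack under a min-cut choice; once the identity is established, the theorem follows at once.
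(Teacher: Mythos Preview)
The paper does not prove Hoffman's Circulation Theorem; it merely states it with a citation to~\cite{Hoffman60} and then uses it as a black box in Sections~3 and~4. So there is no proof in the paper to compare your proposal against.

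That said, your sketch is the standard route and is essentially sound. The necessity direction is correct as written. For sufficiency, the reduction to max-flow/min-cut via a super-source/super-sink construction is classical; two places deserve a bit more care. First, when you truncate $\pm\infty$ to $\pm M$, you are \emph{tightening} the bounds, so you must actually check that the Hoffman inequalities survive: if some $\outstar{\set{W}}{}$ contains an arc whose capacity was $+\infty$, after truncation the right-hand side becomes finite, and you need $M$ large enough relative to the finite $\ell$-values on $\instar{\set{W}}{}$; your choice of $M$ as the sum of absolute values of all finite capacities handles this, but it should be stated rather than asserted. Second, the ``routine sign adjustment for arcs with $\ell_a<0$'' is the one genuinely delicate step, since your auxiliary arcs $s^\star\!\to w$ and $v\!\to t^\star$ carry capacity $\ell_a$, which is meaningless when $\ell_a<0$. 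The clean fix is to aggregate at each node the net imbalance $b_v=\sum_{a\in\instar{v}{}}\ell_a-\sum_{a\in\outstar{v}{}}\ell_a$ and attach a single arc from $s^\star$ (if $b_v>0$) or to $t^\star$ (if $b_v<0$) with capacity $|b_v|$; then the cut-capacity identity you allude to goes through cleanly. None of this is a real gap, but both points are exactly where a careless write-up would go wrong.
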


Thus, in order to guarantee $\polytope{Q}_{\circOp}(\widetilde{D},\vec{\ell}^{\vec{x}},\vec{u}^{\vec{x}})\ne\varnothing$ for some $x\in\set{R}$, we have to ensure
\begin{equation}\label{eq:PSgoal}
	\ell^{\vec{x}}\big(\instar{\widetilde{\set{S}}}{\widetilde{D}}\big) \leq u^{\vec{x}}\big(\outstar{\widetilde{\set{S}}}{\widetilde{D}}\big)
\end{equation}
for all $\widetilde{\set{S}}\subseteq \widetilde{\set{V}}$. Clearly, we only have to care about subsets $\widetilde{\set{S}}\subseteq \widetilde{\set{V}}$ such that
\begin{equation}\label{eq:specialProp}
	\instar{\widetilde{\set{S}}}{\widetilde{D}}
	\cup
	\outstar{\widetilde{\set{S}}}{\widetilde{D}}
	\quad
	\text{does not  contain any real arc.}
\end{equation}
Let  $\widetilde{\set{S}}\subseteq \widetilde{\set{V}}$ be such a subset, and define  the 
three subsets 
 \begin{align*}
  \set{S}^{\inOp}       &= \setdef{v\in\set{V}}{v^{\inOp}\in\widetilde{\set{S}} \text{ and } v^{\outOp}\notin\widetilde{\set{S}}}, \\
  \set{S}^{\outOp}      &= \setdef{v\in\set{V}}{v^{\inOp}\notin\widetilde{\set{S}} \text{ and } v^{\outOp}\in\widetilde{\set{S}}}, \text{ and}\\
  \set{S}^{\inOp\outOp} &= \setdef{v\in\set{V}}{v^{\inOp}\in\widetilde{\set{S}} \text{ and } v^{\outOp} \in \widetilde{\set{S}}}
 \end{align*}
of $\set{V}$. Due to~\eqref{eq:specialProp} we find that the left hand side of~\eqref{eq:PSgoal} equals $x(\set{S}^{\outOp})+\gamma$ with $\gamma=1$ if $s^{\inOp}\in\widetilde{\set{S}}$, $t^{\outOp}\not\in\widetilde{\set{S}}$, and $\gamma=0$ otherwise. As the right hand side  of~\eqref{eq:PSgoal} is bounded from below by $x(\set{S}^{\inOp})$, it suffices to ensure $x(\set{S}^{\outOp})+\gamma\le x(\set{S}^{\inOp})$, or, equivalently,
\begin{equation}\label{eq:PSgoal:a}
	x(\set{S}^{\outOp}\cup\set{S}^{\inOp\outOp})+\gamma\le x(\set{S}^{\inOp}\cup\set{S}^{\inOp\outOp})\,.
\end{equation}
Denoting by $\successors{\set{T}}{D}$ the set of all nodes~$w\in\set{V}$ for which there is some $v\in T$ with $(v,w)\in\set{A}$, we find
\begin{equation*}
	\successors{\set{S}^{\outOp}\cup\set{S}^{\inOp\outOp}}{D}\subseteq(\set{S}^{\inOp}\cup\set{S}^{\inOp\outOp})\setminus\{s\}
\end{equation*} 
(due to~\eqref{eq:specialProp} and since~$s$ is a source node). Thus, \eqref{eq:PSgoal:a} follows if 
 \begin{equation}\label{eq:TsuccT}
 	x(\set{T})\le x(\successors{\set{T}}{D})
 \end{equation}
 holds for $T=\set{S}^{\outOp}\cup\set{S}^{\inOp\outOp}$. Indeed, \eqref{eq:TsuccT} obviously is valid for~$\polytope{P}^{s,t}(D)$, unless $t\in T$. Since due to~$x_s=x_t=1$ the difference between the right hand side and the left hand side of~\eqref{eq:PSgoal} remains unchanged when removing~$t^{\outOp}$ from~$\widetilde{\set{S}}$, we thus have established the following linear description. 

\begin{theorem}[Vande Vate~\cite{VandeVate89}]
 For every acyclic digraph $D = (\set{V},\set{A})$ with a source node~$s$ and some other node~$t\ne s$, the following system  provides a linear description of the $s$-$t$-path set polytope~$\polytope{P}^{s,t}(D)$:
 \begin{align*}
	x_s & =  1\\
	x_t & =  1 \\
  x(\set{T}) -x(\successors{\set{T}}{D}) &\leq 0&\quad\forall \set{T}\subseteq \set{V}\ssmin\{t\} \\
  x_v &\geq 0 & \quad\forall v\in\set{V}
 \end{align*}
\end{theorem}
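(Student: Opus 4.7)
The plan is to apply the lifting method from Section~\ref{sec: lifting example} using the circulation extension $\polytope{Q}_{\circOp}(\widetilde{D},\vec{\ell}^{\star},\vec{u}^{\star})$ and the lifting $\lambda$ already introduced, so that the bulk of the argument reduces to Hoffman's Circulation Theorem and the remaining task is to assemble the pieces developed in the excerpt and to dispose of the case $t\in\set{T}$.

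First I would verify validity of the system for $\polytope{P}^{s,t}(D)$. The equations $x_s=x_t=1$ and nonnegativity are immediate from the definition. For a successor inequality indexed by $\set{T}\subseteq\set{V}\ssmin\{t\}$, let $\set{P}$ be the node set of any $s$-$t$-path; every node of $\set{P}\cap\set{T}$ is distinct from $t$ and therefore has a unique path-successor lying in $\successors{\set{T}}{D}\cap\set{P}$, and this path-successor map is injective along $\set{P}$, yielding $|\set{P}\cap\set{T}|\le|\set{P}\cap\successors{\set{T}}{D}|$ as required.

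Next, given $\vec{x}\in\set{R}$ satisfying the system, I must show that $\polytope{Q}_{\circOp}(\widetilde{D},\vec{\ell}^{\vec{x}},\vec{u}^{\vec{x}})$ is nonempty, so that $\lambda(\vec{x})$ actually lands in the extension. By Theorem~\ref{theo: hoffman's circulation theorem}, nonemptiness is equivalent to $\ell^{\vec{x}}\bigl(\instar{\widetilde{\set{S}}}{\widetilde{D}}\bigr)\le u^{\vec{x}}\bigl(\outstar{\widetilde{\set{S}}}{\widetilde{D}}\bigr)$ for every $\widetilde{\set{S}}\subseteq\widetilde{\set{V}}$; since real arcs carry capacities $-\infty$ and $+\infty$, I may restrict to $\widetilde{\set{S}}$ obeying~\eqref{eq:specialProp}. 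Using the partition of $\set{V}$ into $\set{S}^{\inOp},\set{S}^{\outOp},\set{S}^{\inOp\outOp}$, the Hoffman inequality reduces to~\eqref{eq:PSgoal:a}, which by the inclusion $\successors{\set{S}^{\outOp}\cup\set{S}^{\inOp\outOp}}{D}\subseteq(\set{S}^{\inOp}\cup\set{S}^{\inOp\outOp})\ssmin\{s\}$ (exploiting~\eqref{eq:specialProp} and the fact that $s$ is a source) follows from the given successor inequality applied to $\set{T}=\set{S}^{\outOp}\cup\set{S}^{\inOp\outOp}$.

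The main obstacle is that the successor inequalities in the claimed description are only indexed by $\set{T}\subseteq\set{V}\ssmin\{t\}$, whereas the reduction above may produce a $\set{T}$ with $t\in\set{T}$. I would handle this precisely as indicated in the excerpt: when $t^{\outOp}\in\widetilde{\set{S}}$, replace $\widetilde{\set{S}}$ by $\widetilde{\set{S}}\ssmin\{t^{\outOp}\}$; since $x_t=1$ matches the unit capacity on $(t^{\outOp},s^{\inOp})$, both sides of the Hoffman inequality shift by the same amount, so it suffices to verify the inequality on sets with $t^{\outOp}\notin\widetilde{\set{S}}$, which is exactly the case already treated. Invoking the general section enforcing principle from Section~\ref{sec: lifting example} then yields $\polytope{P}^{s,t}(D)=\setDef{\vec{x}\in\set{R}}{\vec{x}\text{ satisfies the system}}$, establishing the theorem.
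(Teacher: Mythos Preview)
Your proposal is correct and follows essentially the same route as the paper: both use the circulation extension, invoke Hoffman's theorem, restrict to cuts satisfying~\eqref{eq:specialProp}, reduce~\eqref{eq:PSgoal} to~\eqref{eq:TsuccT} via the successor inclusion, and dispose of the case $t\in\set{T}$ by deleting $t^{\outOp}$ from $\widetilde{\set{S}}$ using $x_t=1$ together with the unit capacity on $(t^{\outOp},s^{\inOp})$. The only addition is that you spell out the validity of the successor inequalities via the injective path-successor map, which the paper simply declares obvious.
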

 \section{Polytopes of Small Cliques}
\label{sec: small clique polytopes}

The third example of  polytopes
for which one can easily derive  linear descriptions by means of the lifting method are the polytopes
\begin{equation*}
	\polytope{P}^{\leq 2}(G)=\conv\setDef{\vec{x}[\set{C}]\in\{0,1\}^{\set{V}}}{C\subseteq\set{V}\text{ clique of size }\le 2}
\end{equation*}
associated with  (undirected) graphs $G=(\set{V},\set{E})$. Thus $\polytope{P}^{\leq 2}(G)$ is the convex hull of all characteristic vectors of subsets of nodes that are either empty, singletons, or pairs that form edges. Clearly, $\polytope{P}^{\leq 2}(G)$ equals the  polytope associated with all stable sets of size at most two in the complement of~$G$. In fact, a complete description of these polytopes defined in terms of stable sets has been given by Janssen and Kilakos~\cite{JanssenKilakos99}. 
We will also be concerned with the face
\begin{equation*}
	\polytope{P}^{2}(G)=\conv\setDef{\vec{x}[\{v,w\}]\in\{0,1\}^{\set{V}}}{\{v,w\}\in\set{E}}
\end{equation*}
of~$\polytope{P}^{\leq 2}(G)$ whose vertices are the characteristic vectors of cliques of size exactly two (the \emph{edge-polytope} of the graph~$G$). Before we start, let us briefly consider the dimensions of the introduced polytopes.

\begin{rem}\mbox{}\label{obs: dimension of P^2(G) and P^<=2(G)}
  For every graph $G=(\set{V},\set{E})$ we  have $\dim(\polytope{P}^{\le 2}(G))=|\set{V}|$, thus $\polytope{P}^{\le 2}(G)$ is full-dimensional. The dimension of~$\polytope{P}^2(G)$ (whose affine hull does not contain~$\zeroVec{}$) is one less than the rank of the node-edge incidence matrix of~$G$ (whose columns are the vertices of~$\polytope{P}^2(G)$), where this rank is  $|\set{V}|-\beta(G)$ (see, e.g., \cite{BrualdiRyser91}) with~$\beta(G)$ denoting the number of bipartite connected components of~$G$. Thus we have $\dim(\polytope{P}^2(G))=|\set{V}|-\beta(G)-1$.
\end{rem}

In order to describe the extension of~$\polytope{P}^{\leq 2}(G)$ that we are going to use for a given graph $G=(\set{V},\set{E})$, let us  define a digraph $D = (\set{W},\set{A})$ with a node set $\set{W}$ that again contains two clone nodes~$v^1$ and~$v^2$ of each $v\in\set{V}$, as well as two additional nodes~$s$ and~$t$. We denote $\set{U}^1=\setDef{v^1}{v\in\set{U}}$ and $\set{U}^2=\setDef{v^2}{v\in\set{U}}$ for all $\set{U}\subseteq\set{V}$. 
The arc set $\set{A}$ of $D$ is defined as the set containing
\begin{compactitem}
 \item the arc $(t,s)$,
 \item all arcs pointing from $s$ to $\set{V}^{1}\cup \set{V}^{2}$,
 \item all arcs pointing from from $\set{V}^{1}\cup \set{V}^{2}$ to $t$, and
 \item for any edge $\{v,w\}\in \set{E}$ both the arc $(v^{1},w^{2})$ and the arc $(w^{1},v^{2})$.
\end{compactitem}
Figure~\ref{fig: extension graph for P leq 2 (G)} shows an example for a graph $G$ and its associated digraph $D$.
\begin{figure}
 \psfrag{s}{\scriptsize$s$}              \psfrag{t}{\scriptsize$t$}
 \psfrag{a}{\scriptsize$a$}
 \psfrag{b}{\scriptsize$b$}
 \psfrag{c}{\scriptsize$c$}
 \psfrag{d}{\scriptsize$d$}
 \psfrag{e}{\scriptsize$e$}
 \psfrag{a1}{\scriptsize$a^{(1)}$}              \psfrag{a2}{\scriptsize$a^{(2)}$}
 \psfrag{b1}{\scriptsize$b^{(1)}$}              \psfrag{b2}{\scriptsize$b^{(2)}$}
 \psfrag{c1}{\scriptsize$c^{(1)}$}              \psfrag{c2}{\scriptsize$c^{(2)}$}
 \psfrag{d1}{\scriptsize$d^{(1)}$}              \psfrag{d2}{\scriptsize$d^{(2)}$}
 \psfrag{e1}{\scriptsize$e^{(1)}$}              \psfrag{e2}{\scriptsize$e^{(2)}$}
 \psfrag{V}{\scriptsize$V^{(1)}$}        \psfrag{Vbar}{\scriptsize$V^{(2)}$}
 \includegraphics[width=\textwidth]{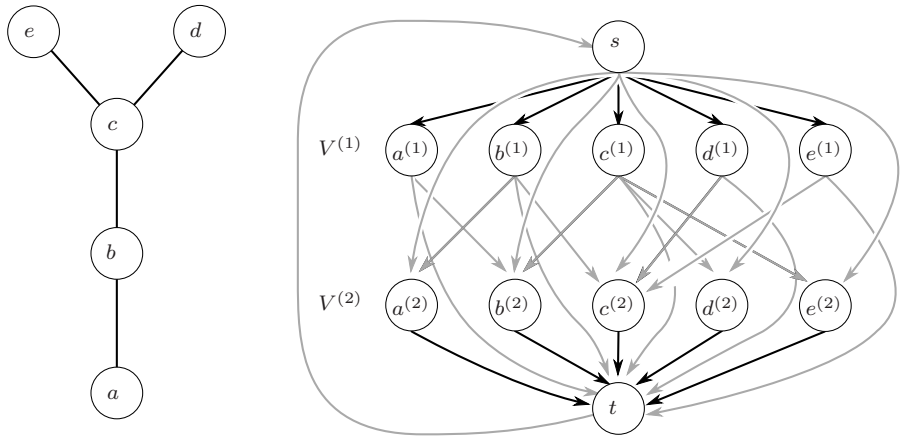}
 \caption{A graph~$G$ and its associated digraph $D$ used in the construction of the extended formulation for~$\polytope{P}^{\leq 2}$. 
 \label{fig: extension graph for P leq 2 (G)}}
\end{figure}

\def\Qstar{\polytope{Q}_{\circOp}(D,\vec{u}^{\star},\vec{\ell}^{\star})}
\def\Qx{\polytope{Q}_{\circOp}(D,\vec{u}^{\vec{x}},\vec{\ell}^{\vec{x}})}
The extension we use is the circulation polytope~$\Qstar$ on~$D$ defined via the following capacities:
\[
 \begin{array}{r@{\;}ll}
  \ell_a^{\star} &= 0       &\text{ for all } a\in\set{A} \\
  u_a^{\star}    &= 1       &\text{ for }     a=(t,s) \\
  u_a^{\star}    &= +\infty &\text{ for all } a\in\set{A}\ssmin\{(t,s)\}
 \end{array}
\]
Again, the vertices of~$\Qstar$ are the characteristic vectors of the directed cycles in~$D$, all of which contain~$(t,s)$. One easily finds that these cycles correspond to the cliques of size at most two in~$G$ (where the empty set is induced by the cycle $\{(s,t),(t,s)\}$, and each clique of size one or two is induced by two cycles). In particular, we have $\sigma(\Qstar)=\polytope{P}^{\leq 2}(G)$ with 
$\sigma: \R^\set{A}\rightarrow\R^\set{V}$ defined via $\sigma(\vec{y})_v=y_{(s,v^{1})}+y_{(v^{2},t)}$.

In order to define a suitable lifting, observe that for every clique~$C\subseteq\set{V}$ of size one or two the most natural choice of a preimage of $\vec{x}[C]$ under the projection~$\sigma$ seems to be the 
average of the two vertices of~$\Qstar$ projected to $\vec{x}[C]$ by~$\sigma$. Therefore, we 
define the lifting $\lambda:\R_+^{\set{V}}\rightarrow\R_+^{\set{A}}$ (with $\set{R}=\R_+^{\set{V}}$ in this case) as follows: For $x\in\R_+^{\set{V}}$ let $\ell^{\vec{x}},u^{\vec{x}}\in\R_+^{\set{A}}$ be lower and upper capacities vectors being equal to~$\ell^{\star}$ and~$u^{\star}$, respectively, in all components except for
\begin{equation*}
	\ell^{\vec{x}}_{(s,v^1)} = \ell^{\vec{x}}_{(v^2,t)} =  
	u^{\vec{x}}_{(s,v^1)} = u^{\vec{x}}_{(v^2,t)} =  
	\frac{x_v}{2}
\end{equation*}
for all $v\in\set{V}$, and choose~$\lambda(\vec{x})$ arbitrarily in~$\Qx$ if the latter set of circulations is non-empty, and (again, just for formal reasons) let $\lambda(\vec{x})\in\R^{\set{A}}$ be an arbitrary point with $\lambda(\vec{x})_{(s,v^1)}=\lambda(\vec{x})_{(v^2,t)}=x_v/2$ for all $v\in\set{V}$ otherwise.

Clearly, $\sigma(\lambda(\vec{x}))=\vec{x}$ holds for all $\vec{x}\in\R_+^{\set{V}}$. Therefore, we only have to find a system of inequalities that is valid for~$\polytope{P}^{\leq 2}(G)$ and section enforcing. As we have $\Qx\subseteq\Qstar$ for all $\vec{x}\in\R_+^{\set{V}}$, the latter condition just means that $\Qx\ne\varnothing$ holds for every $\vec{x}\in\R_+^{\set{V}}$ satisfying that system.
And, by Hoffman's Circulation Theorem~\ref{theo: hoffman's circulation theorem}, for $\vec{x}\in\R_+^{\set{V}}$ we know that $\Qx\ne\varnothing$  is equivalent to
\begin{equation}\label{eq:stab:HoffmanCond}
	\ell^{\vec{x}}(\instar{\set{S}}{D}) \leq u^{\vec{x}}(\outstar{\set{S}}{D}) 
\end{equation}
for all $\set{S}\subset \set{W}$. In fact, if $s\in\set{S}$, then~\eqref{eq:stab:HoffmanCond} is satisfied without any further assumptions on~$\vec{x}\in\R_+^{\set{V}}$ (as then the right-hand side of~\eqref{eq:stab:HoffmanCond} is $+\infty$ if $\set{V}^2\not\subseteq\set{S}$, and the left-hand side of~\eqref{eq:stab:HoffmanCond} is zero, otherwise). Similarly, \eqref{eq:stab:HoffmanCond}  is also satisfied if $t\not\in\set{S}$ holds. Therefore, we only have to ensure by the system to be found that~\eqref{eq:stab:HoffmanCond} holds for all $\set{S}\subseteq\set{W}$ with
\begin{equation}\label{eq:S:cond:1}
	s\not\in\set{S}\quad\text{and}\quad t\in\set{S}\,.
\end{equation}
Among these sets~$\set{S}$, we furthermore only need to consider those with
\begin{equation}\label{eq:S:cond:2}
	\outstar{\set{S}}{D}\cap(\set{V}^1:\set{V}^2)=\varnothing
\end{equation}
(as otherwise the right-hand side of~\eqref{eq:stab:HoffmanCond} again is~$+\infty$). For an arbitrary subset  $\set{S}\subseteq\set{W}$ satisfying~\eqref{eq:S:cond:1} and~\eqref{eq:S:cond:2}, let us partition the original node set~$\set{V}$ into $V=\set{V}_1\uplus\set{V}_2\uplus\set{V}_3\uplus\set{V}_4$ such that we have
\begin{align*}
 \set{V}^1_1\cap\set{S}=\varnothing    &\text{ and } \set{V}^2_1\cap\set{S}=\varnothing &
 \set{V}^1_2\subseteq\set{S} &\text{ and } \set{V}^2_2\cap\set{S}=\varnothing \\
 \set{V}^1_3\cap\set{S}=\varnothing &\text{ and } \set{V}^2_3\subseteq\set{S} &
 \set{V}^1_4\subseteq\set{S} &\text{ and } \set{V}^2_4\subseteq\set{S}.
\end{align*}
Thus we find that the left-hand side of~\eqref{eq:stab:HoffmanCond} evaluates to $\tfrac12 x(\set{V}_1)+x(\set{V}_2)+\tfrac12 x(\set{V}_4)$ and the right-hand side equals one. Hence, we need to find a system of valid inequalities for~$\polytope{P}^{\leq 2}(G)$ ensuring
\begin{equation}
	x(\set{V}_1)+2 x(\set{V}_2)+x(\set{V}_4)\le 2
\end{equation}
for all $\set{S}\subseteq\set{W}$ satisfying~\eqref{eq:S:cond:1} and~\eqref{eq:S:cond:2}. Indeed, \eqref{eq:S:cond:2} implies that~$\set{V}_2$ is a stable set in~$G$ and $\set{V}_1\cup\set{V}_4$ is a subset of $\nonNb{\set{V}_2}$, where, for any subset $\set{T}\subseteq\set{V}$ we denote by $\nonNb{\set{T}}$ the set of all nodes in~$\set{V}\setminus\set{T}$ that are not adjacent to any node from~$\set{T}$. Since the system
\begin{equation}\label{eq:stabTheSystem}
	2x(\set{T})+x(\nonNb{\set{T}})\le 2
	\quad
	\text{for all stable sets }\set{T}\subseteq\set{V}\text{ in }G
\end{equation}
is valid for~$\polytope{P}^{\leq 2}(G)$ (and due to the nonnegativity of~$\vec{x}$), \eqref{eq:stabTheSystem} thus is a system as searched for. 

\begin{theorem}[see also Janssen and Kilakos~\cite{JanssenKilakos99}]
 For every graph $G = (\set{V},\set{E})$, the following set of inequalities provides a complete  linear description for $\polytope{P}^{\leq 2}(G)$:
 \begin{align}
	2x(\set{T})+x(\nonNb{\set{T}}) & \le  2 &
	\forall\set{T}\subseteq\set{V}\text{ stable in }G
	\label{ineq: fDI linear description for P<=2 a}
\\
  x_v &\geq 0 & \forall v\in \set{V} \label{ineq: fDI linear description for P<=2 b}
 \end{align}
	If one restricts~\eqref{ineq: fDI linear description for P<=2 a} to those stable sets~$\set{T}$ for which the subgraph of~$G$ induced by $\nonNb{\set{T}}$ does not have any bipartite connected component, then the description is irredundant.
\end{theorem}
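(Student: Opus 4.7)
The plan splits naturally into four parts. First, the inclusion $\polytope{P}^{\leq 2}(G)\subseteq\setDef{\vec{x}\in\R^{\set{V}}_+}{\vec{x}\text{ satisfies }\eqref{ineq: fDI linear description for P<=2 a}}$ follows from a short case analysis on cliques of size at most two, using that stability of~$\set{T}$ forbids both endpoints of an edge from lying in~$\set{T}$ and that there is no edge between $\set{T}$ and $\nonNb{\set{T}}$; the reverse inclusion is precisely what the lifting-method derivation preceding the theorem established, since the system~\eqref{eq:stabTheSystem} was shown to be section enforcing. Second, each nonnegativity inequality $x_v\ge 0$ is facet-defining via the $|\set{V}|$ affinely independent points $\zeroVec$ and $\vec{x}[\{w\}]$ for $w\in\set{V}\ssmin\{v\}$. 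The remaining work is to equate facet-definingness of~\eqref{ineq: fDI linear description for P<=2 a} with the absence of bipartite connected components in $G[\nonNb{\set{T}}]$.

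For the main step, fix a stable set~$\set{T}$. A direct case analysis shows that a clique $\set{C}$ of size at most two satisfies $2|\set{C}\cap\set{T}|+|\set{C}\cap\nonNb{\set{T}}|=2$ if and only if $\set{C}$ is either a singleton $\{v\}$ with $v\in\set{T}$, an edge $\{v,w\}\in\set{E}$ with $v\in\set{T}$ and $w\in\nb{\set{T}}$, or an edge $\{u,u'\}\in\set{E}$ with both ends in $\nonNb{\set{T}}$. To prove facet-definingness, I would consider an arbitrary equation $\sum_v a_v x_v=c$ satisfied by the characteristic vectors of all such tight cliques. The first two families force $a_v=c$ on $\set{T}$ and $a_w=0$ on $\nb{\set{T}}$, while the third imposes $a_u+a_{u'}=c$ along every edge of $G[\nonNb{\set{T}}]$. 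A non-bipartite component of $G[\nonNb{\set{T}}]$ has an odd cycle, which forces $a_u=c/2$ throughout that component; a bipartite component (including the degenerate case of an isolated vertex) contributes exactly one additional degree of freedom, in complete analogy with the dimension formula of the preceding remark on~$\polytope{P}^2$. Hence the space of valid equations has dimension $1+\beta(G[\nonNb{\set{T}}])$, and collapses to the desired one-dimensional span of the inequality itself precisely when $\beta(G[\nonNb{\set{T}}])=0$.

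For the redundancy direction, suppose $G[\nonNb{\set{T}}]$ has a bipartite component~$C$ with bipartition $(A,B)$. Both $\set{T}\cup A$ and $\set{T}\cup B$ are stable, and one checks $\nonNb{\set{T}\cup A}=\nonNb{\set{T}\cup B}=\nonNb{\set{T}}\ssmin C$, so averaging the two corresponding inequalities in~\eqref{ineq: fDI linear description for P<=2 a} exactly reproduces the inequality for~$\set{T}$, which is therefore implied. When $C=\{u\}$ is a single isolated vertex (so $B=\varnothing$), one instead combines the inequality for $\set{T}\cup\{u\}$ with $x_u\ge 0$. As each such extension strictly decreases the number of bipartite components in $G[\nonNb{\cdot}]$, iterating writes every inequality of~\eqref{ineq: fDI linear description for P<=2 a} as a nonnegative combination of inequalities meeting the bipartite-free restriction together with nonnegativity constraints.

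The main obstacle is the dimension count of the second paragraph: once the system of valid equations on the tight vertices is written down, one must correctly account for the degrees of freedom contributed by each connected component of $G[\nonNb{\set{T}}]$, paying careful attention to the technically bipartite case of isolated vertices (which are precisely the source of the restriction in the statement). The other three parts are routine verifications once the lifting-method inclusion and the bipartite-extension averaging trick are in hand.
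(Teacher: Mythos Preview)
Your proposal is correct and follows essentially the same route as the paper: completeness via the lifting argument preceding the theorem, facet-definingness of each $x_v\ge 0$, and a dimension computation on the face of each stable-set inequality. The paper carries out the last step primally, counting affinely independent tight vertices and invoking Remark~\ref{obs: dimension of P^2(G) and P^<=2(G)} on $\dim\polytope{P}^2(G[\nonNb{\set{T}}])$ to obtain face dimension $|\set{V}|-1-\beta$; your dual formulation via the space of valid equations is the same computation in different clothing. One genuine addition in your plan is the explicit averaging argument showing that each $\beta>0$ inequality is a nonnegative combination of others---the paper does not do this, instead inferring redundancy from the general fact that a non-facet-defining inequality in a complete description of a full-dimensional polytope is redundant. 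Your constructive version is a nice extra, though strictly unnecessary once the dimension is known. The one small point you leave implicit (and the paper states explicitly) is that no two inequalities in the restricted list are positive scalar multiples of one another; you should note this to finish the irredundancy claim.
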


\begin{proof}\mbox{}
	The fact that~\eqref{ineq: fDI linear description for P<=2 a} and~\eqref{ineq: fDI linear description for P<=2 b} provide a complete linear description of~$\polytope{P}^{\leq 2}(G)$ follows from the arguments given above. 
	
	Clearly, all inequalities~\eqref{ineq: fDI linear description for P<=2 b} define facets of~$\polytope{P}^{\leq 2}(G)$, because the face of the~$n$-dimensional polytope~$\polytope{P}^{\leq 2}(G)$ clearly is isomorphic to the $(n-1)$-dimensional polytope~$\polytope{P}^{\leq 2}(G[V\setminus\{v\}])$ (where $G[\set{W}]$ is the subgraph of~$G$ induced by the node subset~$\set{W}$). For every stable set $\set{T}\subseteq\set{V}$ in~$G$, we find that the face defined by the corresponding inequality from~\eqref{ineq: fDI linear description for P<=2 b} contains for each $v\in\set{T}$ the point $\unitvec{v}$ (the point with all components equal to zero except for a one in component~$v$),  for each $v\in\set{V}\setminus(\set{T}\cup\nonNb{\set{T}})$ a point $\unitvec{v}+\unitvec{w}$ for some $w\in\set{T}$, and the set $\setDef{\unitvec{v}+\unitvec{w}}{v,w\in\nonNb{\set{T}},\{v,w\}\in\set{E}}$. Since the latter set is isomorphic to the vertex set of~$\polytope{P}^{\leq 2}(G[\nonNb{\set{T}}])$, we find from Remark~\ref{obs: dimension of P^2(G) and P^<=2(G)} that the dimension of the face we are considering is 
\begin{equation*}
	|\set{T}|+|\set{V}\setminus(\set{T}\cup\nonNb{\set{T}})|+|\nonNb{\set{T}}|-|\beta|-1=|\set{V}|-1-\beta\,,
\end{equation*}
where~$\beta$ is the number of bipartite connected components of~$G[\nonNb{\set{T}}]$. As clearly none of the inequalities in~\eqref{ineq: fDI linear description for P<=2 a} and~\eqref{ineq: fDI linear description for P<=2 b} is a multiple of another one, this proves the statement on irredundancy.
\end{proof}
Note that the characterization of facet defining inequalities given in~\cite{JanssenKilakos99} seems not to be completely correct (as has been noticed by Matthias Peinhardt). For instance, according to the characterization given there, for the graph consisting of  three components, one being an isolated node~$k$, one being a triangle on the  set~$\set{A}_1$ of three nodes and one being an isolated edge on the two-nodes set~$\set{A}_2$, the inequality $2x(\set{K})+x(\set{A})\le 2$ with $\set{K}=\{k\}$ and $\set{A}=\set{A}_1\cup\set{A}_2$ (thus $\set{A}=\nonNb{\set{K}}$) should be facet defining, which it is, however, not, since the subgraph induced by~$\set{A}$ clearly has one bipartite component.

\medskip
  We close this section by providing also an irredundant linear description of the face~$\polytope{P}^2(G)$ of $\polytope{P}^{\leq 2}(G)$ defined by the equation $\vec{x}(\set{V}) = 2$. 
   The face of~$\polytope{P}^2(G)$ defined by an inequality of type~\eqref{ineq: fDI linear description for P<=2 a} is isomorphic to~$\polytope{P}^2(G')$, where~$G'$ is the  graph obtained from~$G$ by removing all edges   
 inside~$\nb{\set{T}}$ as well as all edges connecting~$\nb{\set{T}}$ 
with~$\nonNb{\set{T}}$ 
(where~$\nb{\set{T}}$ is the set of nodes outside~$\set{T}$ adjacent to any node in~$\set{T}$). Thus, denoting 
 by~$\family{T}$ the set of those stable sets~$\set{T}$ in~$G$ such that the number of bipartite connected components increases by exactly one when removing all edges inside~$\nb{\set{T}}$ as well as all edges connecting~$\nb{\set{T}}$ with~$\nonNb{\set{T}}$, we find (again using Remark~\ref{obs: dimension of P^2(G) and P^<=2(G)})
     that~\eqref{ineq: fDI linear description for P<=2 a} defines a facet of~$\polytope{P}^2(G)$ if and only if $\set{T}\in\family{T}$ holds. Moreover, the inequality in~\eqref{ineq: fDI linear description for P<=2 a} defines an implicit equation for~$\polytope{P}^2(G)$ if and only if~$\set{T}$ is from the set~$\family{B}$ of shores of bipartite connected components of~$G$ (where the two \emph{shores} of a bipartite connected component are meant to be the two stable sets into which its node set can be partitioned). Finally, the face of~$\polytope{P}^2(G)$ defined by the inequality in~\eqref{ineq: fDI linear description for P<=2 b} is isomorphic to~$\polytope{P}^2(G[\set{V}\setminus\{v\}])$, and the inequality is an implicit equation for~$\polytope{P}^2(G)$ if and only if~$v$ is an isolated  node in~$G$ (we denote the set of isolated nodes by~$\set{I}$). Denoting  by $\widetilde{\set{V}}$ the subset of all nodes~$v$ for which the number of bipartite components does not increase when removing~$v$ from~$G$, we thus find that $x_v\ge 0$ defines a facet of~$\polytope{P}^2(G)$ if and only if $v\in\widetilde{\set{V}}$ holds. Subtracting, for cosmetic reasons, the equation $x(\set{V})=2$ from the inequalities~\eqref{ineq: fDI linear description for P<=2 a}, we thus have established the following.
 
\begin{theorem}
 For any graph $G = (\set{V},\set{E})$, the following set of inequalities provides a complete non-redundant linear description for $\polytope{P}^{2}(G)$:
 \begin{align*}
  x(\set{V}) &= 2 \\
  x(\set{T})     - x(\nb{\set{T}}{G}) &= 0 \quad\forall\set{T}\in\family{B} \\
  x(\set{T})     - x(\nb{\set{T}}{G}) &\leq 0\quad\forall\set{T}\in\family{T} \\
  x_v                    &= 0\quad \forall v\in\set{I} \\
  x_v                    &\geq 0\quad \forall v\in\widetilde{\set{V}}
 \end{align*}
\end{theorem}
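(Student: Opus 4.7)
My plan follows the roadmap sketched in the paragraph preceding the theorem. Since $\polytope{P}^2(G)$ is the face of $\polytope{P}^{\le 2}(G)$ cut out by $x(\set{V})=2$, I would derive the description by adjoining this equation to the complete description of $\polytope{P}^{\le 2}(G)$ established in the previous theorem, and then classify each resulting inequality by applying Remark~\ref{obs: dimension of P^2(G) and P^<=2(G)} to the face it defines. Using the partition $\set{V}=\set{T}\uplus\nb{\set{T}}\uplus\nonNb{\set{T}}$ together with $x(\set{V})=2$, the inequality $2x(\set{T})+x(\nonNb{\set{T}})\le 2$ from the description of $\polytope{P}^{\le 2}(G)$ simplifies to $x(\set{T})-x(\nb{\set{T}})\le 0$; this already yields completeness of the (a priori redundant) system consisting of $x(\set{V})=2$, these inequalities for all stable $\set{T}\subseteq\set{V}$, and $x_v\ge 0$ for all $v\in\set{V}$.

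Next I would identify the face $F_{\set{T}}$ of $\polytope{P}^2(G)$ on which $x(\set{T})=x(\nb{\set{T}})$. Classifying each edge $\{v,w\}\in\set{E}$ by the positions of its endpoints among $\set{T}$, $\nb{\set{T}}$, $\nonNb{\set{T}}$ shows that $\unitvec{v}+\unitvec{w}$ saturates the inequality exactly when $\{v,w\}$ lies inside $\nonNb{\set{T}}$ or joins $\set{T}$ with $\nb{\set{T}}$; consequently $F_{\set{T}}=\polytope{P}^2(G')$, where $G'$ is $G$ with all edges inside $\nb{\set{T}}$ and all edges between $\nb{\set{T}}$ and $\nonNb{\set{T}}$ deleted. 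Remark~\ref{obs: dimension of P^2(G) and P^<=2(G)} then gives $\dim F_{\set{T}}=|\set{V}|-\beta(G')-1$, to be compared with $\dim\polytope{P}^2(G)=|\set{V}|-\beta(G)-1$: the inequality defines a facet iff $\beta(G')=\beta(G)+1$ (equivalently $\set{T}\in\family{T}$), an implicit equation iff $G'=G$ (equivalently $\set{T}\in\family{B}$), and is otherwise redundant. The analogous argument with $G[\set{V}\setminus\{v\}]$ in place of $G'$ classifies $x_v\ge 0$ as a facet iff $v\in\widetilde{\set{V}}$ and as an implicit equation iff $v\in\set{I}$.

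Non-redundancy of the pruned list of inequalities is then automatic since each listed inequality is facet-defining. The main obstacle I anticipate is verifying non-redundancy of the equation part of the system: one has to check that $x(\set{V})=2$ together with the equations indexed by $\family{B}$ and $\set{I}$ form a linearly independent set spanning the $(\beta(G)+1)$-codimensional orthogonal complement of the affine hull of $\polytope{P}^2(G)$. The delicate points are that unions of shores of different bipartite components yield further implicit equations that must be shown to be implied by the listed ones, and that isolated vertices appear both through $\set{I}$ and through their singleton shores in $\family{B}$; a careful combinatorial bookkeeping should extract exactly one effective independent equation per bipartite connected component in addition to $x(\set{V})=2$, matching the required codimension $\beta(G)+1$.
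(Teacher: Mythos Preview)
Your plan is essentially the paper's own argument, which is given entirely in the paragraph preceding the theorem: pass to the face $x(\set V)=2$, identify the face cut out by each inequality as $\polytope{P}^2(G')$ (respectively $\polytope{P}^2(G[\set V\setminus\{v\}])$), and read off its dimension from Remark~\ref{obs: dimension of P^2(G) and P^<=2(G)}.

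One small slip: your parenthetical ``$G'=G$ (equivalently $\set T\in\family B$)'' is not an equivalence. The condition $G'=G$ holds precisely when $\nb{\nb{\set T}}\subseteq\set T$, i.e., when $\set T$ is one shore of an arbitrary \emph{union} of bipartite components, whereas $\family B$ (as defined in the paper) contains only shores of \emph{single} bipartite components. You clearly realize this two sentences later, so it is a wording issue rather than a gap. Your caution about the equation block is in fact well placed: as stated, $\family B$ contains both shores of each bipartite component, and the two resulting equations $x(A)-x(B)=0$ and $x(B)-x(A)=0$ are negatives of each other; likewise the shore $\{v\}$ of an isolated vertex duplicates the equation $x_v=0$ already listed for $\set I$. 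So the paper's ``non-redundant'' should be read as applying to the inequality part, and your bookkeeping to extract exactly one independent equation per bipartite component (plus $x(\set V)=2$) is the honest way to make the equation part irredundant.
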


 \section{Orbisacks}
\label{sec:orbi}

Let us denote by $\orbverts{p}$ the set of all  0/1-matrices $\vec{x}\in\{0,1\}^{p\times 2}$ whose first column is lexicographically not smaller than the second one, i.e., for 
\begin{equation*}
	\crit{\vec{x}}=\min(\setDef{i\in\{1,\dots,p\}}{x_{i,1}=1,x_{i,2}=0}\cup\{p+1\})
\end{equation*}
we have
	$x_{i,1}=x_{i,2}$ for all $1\le i<\crit{\vec{x}}$  (the number~$\crit{\vec{x}}$ is the \emph{critical row} of~$x$).
We call the polytope $\orb{p}=\conv{\orbverts{p}}$ an \emph{orbisack}, because it is both an orbitope (see, e.g., \cite{KaibelPfetsch08,Loos11}) and a Knapsack polytope. 

In this section, we will first first identify~$\orb{p}$ as a projection of a polytope~$\polytope{Q}_p^{x,y}$, and then we will identify~$\polytope{Q}_p^{x,y}$ itself as a projection of another polytope~$\polytope{Q}_p^{\tilde{x},y,z}$. For the latter polytope  it will be trivial to find a linear description, hence yielding an extended formulation for~$\polytope{Q}_p^{x,y}$, from which we will derive a linear description of~$\polytope{Q}_p^{x,y}$ by the lifting method. Applying the lifting method once more to the extended formulation of~$\orb{p}$ given by the latter description 
of~$\polytope{Q}_p^{x,y}$ will finally lead us to a linear description of~$\orb{p}$. 
So much for the plan, let's get it done.

In order to define the first extension~$\polytope{Q}_p^{x,y}$,  we append to each vertex $\vec{x}$ of the orbisack~$\orb{p}$ some $0/1$-vector storing information about the position of the critical row of $\vec{x}$. More precisely, we define for each vertex $\vec{x}$ of the orbisack $\orbisack{p,2}$ the vector $\vec{y}(\vec{x})\in\{0,1\}^{[p]}$ via
\[
 \vec{y}(\vec{x}) =  \left\{\begin{array}{rl} 
                       \unitvec{\crit{\vec{x}}}, &\text{ if } \crit{\vec{x}} < p+1 \\ 
                       \zerovec{}, &\text{ if } \crit{\vec{x}} = p+1 
                      \end{array}\right.
\]
(where $\unitvec{i}$ is the point with all components equal to zero except for a one at component~$i$).
Thus
\[
 \polytope{Q}_p^{x,y} = \conv \setdef{(\vec{x},\vec{y}(\vec{x}))\in \R^{[p]\times [2]}\times\R^{[p]}}{\vec{x} \text{ vertex of } \orbisack{p,2}}
\]
clearly provides an extension of~$\orb{p}$ via 
the coordinate projection
 \[
  \sigma:\R^{[p]\times[2]}\times\R^{[p]}\to\R^{[p]\times[2]},\quad (\vec{x},\vec{y}) \mapsto \vec{x}\,.
 \]

For the construction of the extension~$\polytope{Q}_p^{\tilde{x},y,z}$ of~$\polytope{Q}_p^{x,y}$ announced above, we furthermore define for every vertex $\vec{x}$ of $\orbisack{p,2}$ the points
$\tilde{\vec{x}}(\vec{x})\in \{0,1\}^{[p]\times[2]}$ with
\begin{equation*}
\tilde{\vec{x}}(\vec{x}) = 
\begin{cases}
    (x_{i,1},x_{i,2}), & \text{ if } i>\crit{\vec{x}} \\
    (0,0),             & \text{ otherwise }	
\end{cases}
\end{equation*}
and $\vec{z}(\vec{x})\in\{0,1\}^p$ with 
\begin{equation*}
z_i = 
\begin{cases}
    x_{i,1}=x_{i,2}, & \text{ if } i<\crit{\vec{x}} \\
    0,       & \text{ otherwise }	
\end{cases}
\end{equation*}
for all $i\in[p]$.
Thus, $\tilde{\vec{x}}(\vec{x})$ and $\vec{z}(\vec{x})$ store the entries of~$\vec{x}$ below and above the critical row  of~$\vec{x}$, respectively.
It is easy to see that
\[
 \polytope{Q}_p^{\tilde{x},y,z} = \conv \setdef{(\tilde{\vec{x}}(\vec{x}),\vec{y}(\vec{x}),\vec{z}(\vec{x}))\in \R^{[p]\times [2]}\times\R^{[p]}\times\R^{[p]}}{\vec{x} \text{ vertex of } \orbisack{p,2}}
\]
provides an extension of~$\polytope{Q}_p^{x,y}$ via the  projection $\tilde{\sigma}:\R^{[p]\times[2]}\times\R^p\times\R^p$ defined by $(\tilde{\vec{x}},\vec{y},\vec{z})\mapsto (\vec{x},\vec{y})$ with:
\begin{eqnarray}
 x_{i,1} &=& \tilde{x}_{i,1} + y_i + z_i \label{eq: xyz -> xi1} \\
 x_{i,2} &=& \tilde{x}_{i,2} + z_i       \label{eq: xyz -> xi2}
\end{eqnarray}
It turns out that a linear description of~$\polytope{Q}_p^{\tilde{x},y,z}$ is easy to obtain.

\begin{proposition}\label{prop: linear description of P tilde(x)yz}
 The polytope $\polytope{Q}_p^{\tilde{x},y,z}$ is  described by the following set of inequalities
  \begin{align}
  \tilde{x}_{i,1} - \sum_{k=1}^{i-1} y_k &\leq 0 \qquad\forall i\in[p]            \label{ineq: xyz-description 1} \\
  \tilde{x}_{i,2} - \sum_{k=1}^{i-1} y_k &\leq 0 \qquad\forall i\in[p]             \label{ineq: xyz-description 2} \\
  \sum_{k=1}^{i} y_k + z_i &\leq 1               \qquad\forall i\in [p]                       \label{ineq: xyz-description 3} \\
  \tilde{x}_{i,j},y_i,z_i &\geq 0                \qquad\forall i\in [p] \text{ and } j\in [2] \label{ineq: xyz-description 4} 
   \end{align}
\end{proposition}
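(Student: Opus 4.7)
The plan is to prove the two inclusions $\polytope{Q}_p^{\tilde{x},y,z}\subseteq\polytope{P}$ and $\polytope{P}\subseteq\polytope{Q}_p^{\tilde{x},y,z}$ separately, where~$\polytope{P}$ denotes the polyhedron cut out by~\eqref{ineq: xyz-description 1}--\eqref{ineq: xyz-description 4}. The first inclusion is a routine check on the generating vertices; for the second I would exhibit every feasible point of~$\polytope{P}$ as an explicit convex combination of extended vertices via a probabilistic construction.

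For the first inclusion, fix an orbisack vertex~$\vec{x}$ with $c=\crit{\vec{x}}$. Then $\tilde{\vec{x}}(\vec{x})_{i,j}=0$ whenever $i\le c$, while for $i>c$ the sum $\sum_{k<i}\vec{y}(\vec{x})_k$ contains $\vec{y}(\vec{x})_c=1$, which yields~\eqref{ineq: xyz-description 1} and~\eqref{ineq: xyz-description 2}. The left-hand side of~\eqref{ineq: xyz-description 3} equals~$1$ when $c\le i$ (then $c\le p$, $\vec{y}(\vec{x})_c=1$ is included, and $\vec{z}(\vec{x})_i=0$) and equals $\vec{z}(\vec{x})_i\in\{0,1\}$ otherwise; nonnegativity is obvious.

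For the converse, take any $(\tilde{x},y,z)\in\polytope{P}$ and use~\eqref{ineq: xyz-description 3} with $i=p$ to obtain $\sum_{k=1}^p y_k\le 1$, which lets me introduce a random index $C\in\{0,1,\dots,p\}$ by $\Pr(C=k)=y_k$ for $k\ge 1$ and $\Pr(C=0)=1-\sum_{k=1}^p y_k$ (with $C=0$ encoding the sentinel $\crit{\vec{x}}=p+1$). Conditionally on~$C$, I would build a random orbisack vertex~$\vec{X}$ with $\crit{\vec{X}}=C$ (or $\crit{\vec{X}}=p+1$ when $C=0$) by independent Bernoulli draws: for each row $i$ strictly above the critical row set the common value $X_{i,1}=X_{i,2}=1$ with probability $z_i/\Pr(C>i)$, and for each row $i$ strictly below the critical row set $X_{i,j}=1$ independently for $j\in\{1,2\}$ with probability $\tilde{x}_{i,j}/\Pr(C<i)$. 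Inequalities~\eqref{ineq: xyz-description 1}--\eqref{ineq: xyz-description 3} guarantee that these Bernoulli parameters lie in $[0,1]$: for instance,~\eqref{ineq: xyz-description 1} rewrites as $\tilde{x}_{i,1}\le\sum_{k<i}y_k=\Pr(C<i)$ and~\eqref{ineq: xyz-description 3} as $z_i\le 1-\sum_{k\le i}y_k=\Pr(C>i)$. The law of total expectation then produces
\begin{equation*}
	\mathbb{E}[\vec{y}(\vec{X})_k]=y_k,\quad
	\mathbb{E}[\vec{z}(\vec{X})_i]=z_i,\quad
	\mathbb{E}[\tilde{\vec{x}}(\vec{X})_{i,j}]=\tilde{x}_{i,j},
\end{equation*}
expressing $(\tilde{x},y,z)$ as a convex combination of extended orbisack vertices.

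The only delicate step is the degenerate case in which $\Pr(C<i)$ or $\Pr(C>i)$ vanishes; in each such case the corresponding inequality forces the numerator of the undefined Bernoulli parameter to vanish as well, and setting that parameter to zero preserves all expectation identities.
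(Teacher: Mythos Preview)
Your argument is correct, but it follows a genuinely different route from the paper's. The paper's proof is a two-liner via total unimodularity: one checks that the integral solutions of the system are exactly the extended vertices $(\tilde{\vec{x}}(\vec{x}),\vec{y}(\vec{x}),\vec{z}(\vec{x}))$, and then observes that the constraint matrix is totally unimodular (the $\vec{y}$-block is an interval matrix, and each $\tilde{x}_{i,j}$ and each $z_i$ occurs in exactly one inequality, so those columns are unit vectors appended to a TU matrix). Integrality of the polyhedron then finishes the argument. Your probabilistic decomposition replaces the structural TU observation by an explicit convex combination: sampling the critical row according to~$\vec{y}$ and filling in the remaining rows with conditional Bernoullis. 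What your approach buys is a self-contained, constructive proof that does not invoke any off-the-shelf integrality machinery, and it gives a clean combinatorial interpretation of each inequality (each is precisely the statement that one of your conditional Bernoulli parameters lies in~$[0,1]$). What the paper's approach buys is brevity and the immediate identification of the polytope as belonging to a standard well-understood class.

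One notational quibble: you encode the sentinel $\crit{\vec{x}}=p+1$ by $C=0$ but then write $\Pr(C>i)=1-\sum_{k\le i}y_k$ and $\Pr(C<i)=\sum_{k<i}y_k$, which are the probabilities that the \emph{critical row} is $>i$ resp.\ $<i$, not literally that $C>i$ resp.\ $C<i$ with $C\in\{0,1,\dots,p\}$. It would be cleaner to let $C$ take values in $\{1,\dots,p,p+1\}$ with $\Pr(C=p+1)=1-\sum_k y_k$; then all your formulas hold literally.
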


\begin{proof}
	It is easy to check that the integral points satisfying the system \eqref{ineq: xyz-description 1},\dots,\eqref{ineq: xyz-description 4} are exactly the points whose convex hull is~$\polytope{Q}^{\tilde{x},y,z}$ by definition (note that the system implies $\tilde{x}_{1,1}=\tilde{x}_{1,2}=0$). Since the coefficient matrix of that system is totally unimodular (as it basically is an interval matrix on the~$\vec{y}$-part and the identity matrix on the remaining part) this proves the claim.
\end{proof}

The derivation of a linear description of~$\polytope{Q}_p^{x,y}$ from the extended formulation \eqref{ineq: xyz-description 1},\dots,\eqref{ineq: xyz-description 4} now can be done almost automatically. In order to define a suitable lifting function $\tilde{\lambda}:\tilde{\set{R}}\to\R^{[p]\times[2]}\times\R^p\times\R^p$ with 
\begin{equation*}
	\tilde{\set{R}}=\R^{[p]\times[2]}\times\R_+^p\,,
\end{equation*}
 we first deduce from~\eqref{eq: xyz -> xi1} and~\eqref{eq: xyz -> xi2} that  $\tilde{\sigma}(\tilde{\lambda}(\vec{x},\vec{y}))=(\vec{x},\vec{y})$ holds if and only if we have $\tilde{\lambda}(\vec{x},\vec{y})=(\tilde{\vec{x}},\vec{y},\vec{z})$ with
\begin{eqnarray}
 \tilde{x}_{i,1} &=& x_{i,1} - y_i - z_i \label{eq: xi1 -> xyz} \\
 \tilde{x}_{i,2} &=& x_{i,2} - z_i       \label{eq: xi2 -> xyz}
\end{eqnarray}
for all~$i\in[p]$. Therefore, the only freedom we have in the definition of the lifting is the choice of~$\vec{z}$. Plugging in~\eqref{eq: xyz -> xi1} and~\eqref{eq: xyz -> xi2} (and exploiting the definition of~$\tilde{\set{R}}$), the system~\eqref{ineq: xyz-description 1},\dots,\eqref{ineq: xyz-description 4} (to be satisfied by $(\tilde{\vec{x}},\vec{y},\vec{z})=\tilde{\lambda}(\vec{x},\vec{y})$) turns into
\begin{equation*}
	    \max\{x_{i,1} - \sum_{k=1}^{i} y_k\ ,\ x_{i,2} - \sum_{k=1}^{i-1} y_k\ ,\ 0\}  
	\le z_i
	\le \min\{1-\sum_{k=1}^i y_k\ ,\ x_{i,1} - y_i\ ,\ x_{i,2}\}
\end{equation*}
for all $i\in[p]$. For each~$i\in[p]$, such a~$z_i$ exists if and only if the nine inequalities stating that each of the three expressions taken the maximum over shall not exceed any of the three expressions taking the minimum over are satisfied. Thus, the system made up from  these~$9p$ inequalities is section enforcing. Furthermore, it is clear that this system must be feasible for~$\polytope{Q}_p^{x,y}$ because every point in~$\polytope{Q}_p^{x,y}$ has a preimage in~$\polytope{Q}_p^{\tilde{x},y,z}$  (due to 
$\tilde{\sigma}(\polytope{Q}_p^{\tilde{x},y,z})=\polytope{Q}_p^{x,y}$). Hence, that system together with $\vec{y}\ge\zeroVec{}$ provides a linear description of~$\polytope{Q}_p^{x,y}$. Clearing some redundancies we find the following.

\begin{proposition}\label{prop: linear description of P xy}
 The polytope $\polytope{Q}_p^{x,y}$ is  described by the following system of inequalities (each one occurring for all $i\in[p]$):

  \begin{align}
  x_{i,1}    &\leq 1                                               \label{ineq: xy-description 1} \\
  x_{i,2}    &\geq 0                                                \label{ineq: xy-description 2} \\
  y_i        &\geq 0                                               \label{ineq: xy-description 3} \\
  y_i        &\geq x_{i,1}-x_{i,2}-\sum_{k=1}^{i-1}y_k              \label{ineq: xy-description 4} \\
  y_i        &\leq x_{i,1}                                          \label{ineq: xy-description 5} \\
  y_i        &\leq 1-x_{i,2}                                        \label{ineq: xy-description 6} \\
  y_i        &\leq x_{i,1}-x_{i,2}+\sum_{k=1}^{i-1}y_k              \label{ineq: xy-description 7} \\
  y_i        &\leq 1-\sum_{k=1}^{i-1}y_k                            \label{ineq: xy-description 8} 
   \end{align}
\end{proposition}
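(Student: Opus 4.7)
The plan is to execute the program sketched in the paragraphs preceding the proposition. Starting from the extended formulation in Proposition~\ref{prop: linear description of P tilde(x)yz}, I would apply the lifting method along the projection $\tilde{\sigma}$ with domain $\tilde{\set{R}} = \R^{[p]\times[2]}\times\R_+^p$, which contains $\polytope{Q}_p^{x,y}$. Equations \eqref{eq: xi1 -> xyz} and \eqref{eq: xi2 -> xyz} determine $\tilde{x}_{i,1}$ and $\tilde{x}_{i,2}$ as functions of $(\vec{x},\vec{y},\vec{z})$, so the only freedom in constructing $\tilde{\lambda}(\vec{x},\vec{y})$ is the choice of~$\vec{z}$. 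Substituting these relations into \eqref{ineq: xyz-description 1}--\eqref{ineq: xyz-description 4} rewrites the system, row by row, as the two-sided sandwich bound $\max\{\cdot,\cdot,\cdot\}\le z_i\le\min\{\cdot,\cdot,\cdot\}$ already displayed just before the proposition.

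For each $i\in[p]$, a feasible $z_i$ exists if and only if every one of the three max-arguments is bounded above by every one of the three min-arguments, which yields $3\cdot 3 = 9$ inequalities per row. Together with $y_i\ge 0$ inherited from~$\tilde{\set{R}}$, these inequalities form the section-enforcing system. Their validity on $\polytope{Q}_p^{x,y}$ is automatic, since every point of $\polytope{Q}_p^{x,y}$ is the image under $\tilde{\sigma}$ of some point of $\polytope{Q}_p^{\tilde{x},y,z}$ and therefore admits a feasible $z_i$ in each row.

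The remaining step is to verify that clearing redundancies leaves exactly \eqref{ineq: xy-description 1}--\eqref{ineq: xy-description 8}. Seven of the nine pairs per row translate, one each, into \eqref{ineq: xy-description 1}, \eqref{ineq: xy-description 2}, and \eqref{ineq: xy-description 4}--\eqref{ineq: xy-description 8}; the two remaining pairs (namely the comparisons $x_{i,1}-\sum_{k=1}^{i} y_k\le x_{i,1}-y_i$ and $x_{i,2}-\sum_{k=1}^{i-1} y_k\le x_{i,2}$) both collapse to $\sum_{k=1}^{i-1} y_k\ge 0$, which is already implied by \eqref{ineq: xy-description 3} applied to the rows below~$i$. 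I expect the only real obstacle to be this bookkeeping: pairing each of the three max-expressions with each of the three min-expressions and checking algebraically which stated inequality (or which redundant triviality) each comparison produces. Since every translation is a single rearrangement, no deeper argument should be needed.
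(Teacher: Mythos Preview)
Your proposal is correct and follows exactly the approach the paper itself uses: apply the lifting method to the extension $\polytope{Q}_p^{\tilde{x},y,z}$ over $\tilde{\set{R}}=\R^{[p]\times[2]}\times\R_+^p$, observe that the only freedom is in~$\vec{z}$, rewrite the constraints of Proposition~\ref{prop: linear description of P tilde(x)yz} as the displayed sandwich bound on~$z_i$, and read off the nine pairwise comparisons per row together with $y_i\ge 0$. Your explicit bookkeeping (seven comparisons yielding \eqref{ineq: xy-description 1}, \eqref{ineq: xy-description 2}, \eqref{ineq: xy-description 4}--\eqref{ineq: xy-description 8}, and the remaining two collapsing to $\sum_{k<i}y_k\ge 0$) is accurate and in fact more detailed than what the paper spells out.
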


From the extended formulation of~$\orb{p}$ provided by the system in Prop.~\ref{prop: linear description of P xy} (via the orthogonal projection~$\sigma$ to the $x$-coordinates) we now finally derive a linear description of~$\orb{p}$ by the lifting method. In order to construct a suitable lifting $\lambda:\set{R}\to\R^{[p]\times[2]}\times\R^p$ with 
\begin{equation*}
	\set{R}=[0,1]^{[p]\times[2]}
\end{equation*}
let us define  (inductively), for each~$\vec{x}\in\set{R}$, the lifting $\lambda(\vec{x})=(\vec{x},\vec{y})$ via
\begin{equation}\label{eq:defymin}
	y_i=\min\{x_{i,1}\ ,\ 1-x_{i,2}\ ,\ x_{i,1}-x_{i,2}+\sum_{k=1}^{i-1}y_k\ ,\ 1-\sum_{k=1}^{i-1}y_k\}
\end{equation}
for each $i\in[p]$ (note that this implies $y_1=x_{1,1}-x_{1,2}$). The idea here is that with this choice of~$\vec{y}$ we only have to find a system of inequalities for~$\vec{x}\in\set{R}$ that enforces~\eqref{ineq: xy-description 3} and~\eqref{ineq: xy-description 4} for all $i\in[p]$ and that is valid for~$\orb{p}$. 

In order to find such a system, suppose~$(\vec{x},\vec{y})$ with~$\vec{x}\in\set{R}$ and~$\vec{y}$ defined as described above does not satisfy all inequalities~\eqref{ineq: xy-description 3} and~\eqref{ineq: xy-description 4}. Let~$i\st$ be the minimal~$i$ for which any of these inequalities is violated. Due to $\vec{x}\in\set{R}$ and the minimality of~$i\st$ we find that~$y_{i\st}$ can neither be equal to~$x_{i\st,1}$ nor to~$1-x_{i\st,2}$. If~$y_{i\st}$ was equal to $1-\sum_{k=1}^{i\st-1}y_k$ then~\eqref{ineq: xy-description 4} was satisfied due to $\vec{x}\in\set{R}$, and~\eqref{ineq: xy-description 3} could not be violated because of $y_{i\st-1}\le 1-\sum_{k=1}^{i\st-2}y_{k}$ in case of~$i\st>1$, and because of $1\ge 0$ in case of $i\st=1$. Thus we have 
\begin{equation*}
	y_{i\st}=x_{i\st,1}-x_{i\st,2}+\sum_{k=1}^{i\st-1}y_k\,,
\end{equation*}
which due to the minimality of~$i\st$ implies that~\eqref{ineq: xy-description 4} is satisfied, hence
\begin{equation}\label{eq:vio ieq}
	x_{i\st,1}-x_{i\st,2}+\sum_{k=1}^{i\st-1}y_k<0
\end{equation}
must hold. 

The strategy now is to expand the left-hand-side of~\eqref{eq:vio ieq} via~\eqref{eq:defymin} into some linear expression in~$\vec{x}$ and to show that all the linear expressions that could arise this way evaluate to nonnegative values for all vertices of~$\orb{p}$, thus constructing a system of valid inequalities for~$\orb{p}$ that prevent us from~\eqref{eq:vio ieq}. Towards this end let us first observe that also for no $i< i\st$ we have $y_i=1-\sum_{k=1}^{i-1}y_k$ (because this would imply $y_{i\st}=1-\sum_{k=1}^{i\st-1}y_k$ due to $0\le y_{i'}\le1-\sum_{k=1}^{i'-1}y_k=0$  for all $i<i'\le i\st$). Let us define a vector $\vec{\tau}(\vec{x})\in\{0,1,2,3\}^p$ with component
$\vec{\tau}(\vec{x})_i$ equal to~$1$ if $y_i=x_{i,1}$, else if $y_i=1-x_{i,2}$ equal to~$2$,  else if $y_i=x_{i,1}-x_{i,2}+\sum_{k=1}^{i-1}y_k$ equal to~$3$, and otherwise equal to~$0$. We call a vector $\vec{\tau}\in\{0,1,2,3\}^p$ \emph{feasible} if $\tau_1=3$ holds and if there is some $i\in[p]$ such that $\tau_{i'}\ne 0$ for all $1\le i'<i$, $\tau_{i}=3$, and $\tau_{i'}=0$ for all $i<i'\le p$. Thus, $\vec{\tau}(\vec{x})$ is feasible. To every feasible vector $\vec{\tau}\in\{0,1,2,3\}^p$ with $i\st=\max\setDef{i}{\tau_i\ne 0}$ we associate two other vectors $\vec{\alpha}=\vec{\alpha}(\vec{\tau})\in\N^p$ and $\vec{a}=\vec{a}(\vec{\tau})\in\R^{[p]\times[2]}$ via
\begin{equation*}
	\alpha_i=
	\begin{cases}
		0 & \text{if }i>i\st \\
		1 & \text{if }i\in\{i\st,i\st-1\} \\
		\alpha_{i+1} & \text{if }i<i\st-1\text{ and }\tau_{i+1}\ne 3\\
		2\alpha_{i+1} & \text{if }i<i\st-1\text{ and }\tau_{i+1}= 3
	\end{cases}
\end{equation*}
and
\begin{equation*}
	(a_{i,1},a_{i,2})=
	\begin{cases}
		(0,0) &                \text{if }\tau_i=0 \\
		(\alpha_i,0) &         \text{if }\tau_i=1 \\
		(0,-\alpha_i) &        \text{if }\tau_i=2 \\
		(\alpha_i,-\alpha_i) & \text{if }\tau_i=3 
	\end{cases}
\end{equation*}
for all $i\in[p]$ as well as a number
\begin{equation*}
	\beta(\vec{\tau})=\sum_{i\,:\,\tau_i=2}\alpha_i\,.
\end{equation*}
With these definitions, we can write the left-hand-side of~\eqref{eq:vio ieq} as
\begin{equation*}
	x_{i\st,1}-x_{i\st,2}+\sum_{k=1}^{i\st-1}y_k=\scalProd{\vec{a}(\vec{\tau}(\vec{x}))}{\vec{x}}+\beta(\vec{\tau}(\vec{x}))\,.
\end{equation*}
Calling, for every feasible $\vec{\tau}\in\{0,1,2,3\}^p$, the inequality 
\begin{equation*}
	\scalProd{-\vec{a}(\vec{\tau})}{\vec{x}}\le \beta(\vec{\tau})
\end{equation*}
a \emph{block inequality} (called \emph{valued} block inequalities in \cite{Loos11}), it thus remains to show that all block inequalities are valid for~$\orb{p}$. But this is easy to see, since a vertex~$\vec{x}$ of~$\orb{p}$ clearly maximizes $\scalProd{-\vec{a}(\vec{\tau})}{\vec{x}}$ among all vertices with prescribed critical row~$i_c\in[p+1]$ if and only if it satisfies
\begin{equation}\label{eq:vertInBlockFace}
	(x_{i,1},x_{i,2})\ 
	\begin{cases}
		=(0,0) & \text{if }i<i_c\text{ and }\tau_i=1 \\
		=(1,1) & \text{if }i<i_c\text{ and }\tau_i=2 \\
		=(0,1) & \text{if }i>i_c\text{ and }\tau_i=3 \\
		\in\{(0,0),(0,1) & \text{if }i>i_c\text{ and }\tau_i=1 \\
		\in\{(0,1),(1,0) & \text{if }i>i_c\text{ and }\tau_i=2 
	\end{cases}\,,
\end{equation}
and in this case, with $\gamma=1$ in case of $\tau_{i_c}\in\{1,3\}$, and $\gamma=0$ otherwise, we have (setting $\alpha_{p+1}=0$)
\begin{equation*}
	\scalProd{-\vec{a}(\vec{\tau})}{\vec{x}}=
	\sum_{i:i\ne i_c,\tau_i=2}\alpha_i
	\ -\gamma\alpha_{i_c}
	+\sum_{i:i>i_c,\tau_i=3}\alpha_i
	\le\beta(\vec{\tau})\,,
\end{equation*}
where the latter inequality follows from $\sum_{i:i>i_c,\tau_i=3}\alpha_i\le\alpha_{i_c}$ (and equality holds unless $i_c=\max\setDef{i}{\tau_i\ne 0}$). Thus, we have established the main part of the following theorem.

\begin{theorem}
	The block inequalities together with the bounds $\zeroVec{}\le\vec{x}\le\oneVec{}$ provide a complete linear description of the orbisack~$\orb{p}$. The only redundant inequalities in this description are $x_{1,1}\ge 0$ and $x_{1,2}\le 1$. 
\end{theorem}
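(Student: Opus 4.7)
The completeness of the description is precisely the conclusion of the lifting-method argument preceding the theorem: whenever $\vec{x}\in[0,1]^{[p]\times[2]}$ satisfies every block inequality, the $\vec{y}$ defined by~\eqref{eq:defymin} makes $(\vec{x},\vec{y})$ satisfy all the inequalities of Prop.~\ref{prop: linear description of P xy}, so $\vec{x}=\sigma(\vec{x},\vec{y})\in\orb{p}$. What remains is the redundancy claim.

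For the two redundancies, I would apply the block-inequality construction to the feasible vector $\vec{\tau}=(3,0,\ldots,0)$. Here $i\st=1$, $\alpha_1=1$, the only nonzero entries of $\vec{a}(\vec{\tau})$ are $(a_{1,1},a_{1,2})=(1,-1)$, and $\beta(\vec{\tau})=0$, so the associated block inequality reads $x_{1,2}-x_{1,1}\le 0$. Combined with $x_{1,2}\ge 0$ this yields $x_{1,1}\ge 0$; combined with $x_{1,1}\le 1$ it yields $x_{1,2}\le 1$. Hence both bounds are redundant.

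For the non-redundancy of every other inequality I would show each defines a facet of the $(2p)$-dimensional polytope $\orb{p}$. Each surviving bound --- $x_{i,1}\le 1$ (all $i$), $x_{i,2}\ge 0$ (all $i$), $x_{i,1}\ge 0$ ($i\ge 2$), and $x_{i,2}\le 1$ ($i\ge 2$) --- is straightforward: fixing one coordinate still admits $2p-1$ affinely independent vertices of $\orb{p}$, which one writes down by hand. For a block inequality associated with a feasible~$\vec{\tau}$, the calculation preceding the theorem identifies which vertices attain equality. The key additional observation is that the bound $\sum_{i>i_c,\tau_i=3}\alpha_i\le\alpha_{i_c}$ is in fact an equality whenever $i_c\ne i\st=\max\setDef{i}{\tau_i\ne 0}$, so the face contains tight vertices with critical row $i_c$ for every $i_c\in[p+1]\setminus\{i\st\}$ (pinned by~$\vec{\tau}$ as in~\eqref{eq:vertInBlockFace}, supplemented by the natural pinning for $i_c>i\st$). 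Using the $i_c=p+1$ tight vertices to obtain up to~$p$ independent directions by varying the diagonal entries at the rows where $\tau_i\in\{0,3\}$, and then adding one tight vertex with critical row $i_c\in\{1,\ldots,p\}\setminus\{i\st\}$ for each remaining independent direction, one assembles $2p-1$ affinely independent tight vertices.

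The hard part is this combinatorial affine-independence count, together with the complementary verification that no two of the listed inequalities are positive scalar multiples of one another. The latter is easy because feasibility forces $\tau_1=3$ and hence $(a_{1,1},a_{1,2})=(\alpha_1,-\alpha_1)$ with $\alpha_1>0$: this pattern distinguishes every block inequality from every bound, and the full vector $\vec{a}(\vec{\tau})$ can be reconstructed uniquely (up to positive scaling) from $\vec{\tau}$. The affine-dimension count itself is routine once one has the explicit tight-vertex description in hand, so the main remaining task is the bookkeeping needed to carry it out cleanly for all feasible~$\vec{\tau}$.
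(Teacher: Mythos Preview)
Your treatment of completeness and of the two redundant bounds is fine (and your algebraic derivation of the redundancies via the block inequality for $\vec{\tau}=(3,0,\dots,0)$ is actually more direct than the paper's face-containment argument $\set{X}(1,1,0)\subsetneq\set{X}(1,2,0)$, $\set{X}(1,2,1)\subsetneq\set{X}(1,1,1)$). For the remaining bounds you and the paper both do an explicit dimension count, so that part coincides.

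The genuine divergence is in how you handle irredundancy of the block inequalities. You propose to show directly that each block face has dimension $2p-1$ by assembling enough affinely independent tight vertices, and you leave the actual count as ``bookkeeping.'' The paper sidesteps this count entirely: since the system is already known to be complete and the polytope is full-dimensional, it suffices to show that for every feasible $\vec{\tau}$ the vertex set $\set{X}(\vec{\tau})$ is not contained in $\set{X}(i,j,\varrho)$ for any bound, nor in $\set{X}(\vec{\tau}')$ for any other feasible $\vec{\tau}'$. The first non-containment is handled by exhibiting, for each $(i,j,\varrho)$, one tight vertex with $x_{i,j}=1-\varrho$; the second by arguing that $\set{X}(\vec{\tau})\subseteq\set{X}(\vec{\tau}')$ forces $\tau_i=\tau'_i$ coordinate by coordinate. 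This buys the paper a short case analysis in place of your affine-independence bookkeeping. Your route is not wrong, but the step you label ``routine'' is exactly the step the paper's containment argument is designed to avoid; if you pursue your approach you should actually carry out the count rather than defer it.
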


\begin{proof}
	It only remains to prove the statement about redundancy. Let us denote by $\set{F}(i,j,0)$ and $\set{F}(i,j,1)$ the  faces of~$\orb{p}$  defined by $x_{i,j}\ge 0$ and $x_{i,j}\le 1$, respectively, by $\set{L}(i,j,0)$ and $\set{L}(i,j,1)$ the linear subspaces parallel to them (and of the same dimension), and  by  $\set{X}(i,j,0)$ and by $\set{X}(i,j,1)$ the vertex sets of those faces. Clearly, we have $\set{X}(1,1,0)\subsetneq\set{X}(1,2,0)$ and $\set{X}(1,2,1)\subsetneq\set{X}(1,1,1)$, thus both $x_{1,1}\ge 0$ and $x_{1,2}\le 1$ do not define facets of~$\orb{p}$. 
	Every other face $\set{F}(i\st,j\st,\varrho)$ with $\varrho\in\{0,1\}$, however, is a facet of~$\orb{p}$, which one can see as follows. By forming  differences of appropriately chosen pairs from $\set{X}(i\st,j\st,\varrho)$ we find $\unitvec{(i,j)}\in\set{L}(i\st,j\st,\varrho)$ for all $i>1$, $(i,j)\ne (i\st,j\st)$.  In case of $i\st>1$, we similarly find $\unitvec{(1,1)},\unitvec{(1,1)}+\unitvec{(1,2)}\in\set{L}(i\st,j\st,\varrho)$ establishing $\dim(\set{F}(i\st,j\st,\varrho))\ge 2p-1$, and in case of $(i\st,j\st,\varrho)=(1,1,1)$ 
	or $(i\st,j\st,\varrho)=(1,2,0)$ we find $\unitvec{(1,2)}\in\set{L}(i\st,j\st,\varrho)$ or $\unitvec{(1,1)}\in\set{L}(i\st,j\st,\varrho)$, respectively, showing $\dim(\set{F}(i\st,j\st,\varrho))\ge 2p-1$ also for $i\st=1$. 
	
	Hence, denoting by $\set{X}(\vec{\tau})$ the vertex set of the face defined by the block inequality induced by the feasible vector $\vec{\tau}\in\{0,1,2,3\}^p$, we only have to show that $\set{X}(\vec{\tau})$ is neither contained in any $\set{X}(i,j,\varrho)$ nor in any $\set{X}(\vec{\tau}')$ for a feasible vector~$\vec{\tau}'\in\{0,1,2,3\}^p$ different from~$\vec{\tau}$. For a feasible vector $\vec{\tau}\in\{0,1,2,3\}^p$ the set $\set{X}(\vec{\tau})$ consists of those vertices~$\vec{x}$ of~$\orb{p}$ with $i_c\ne i_{\max}$ satisfying~\eqref{eq:vertInBlockFace} for all $i\in[p]$, where $i_c$ is the critical row of~$\vec{x}$ and $i_{\max}=\max\setDef{i}{\tau_i\ne 0}$. Using this characterization, it is easy to construct, for every $(i,j)\in[p]\times [2]$ and $\varrho\in\{0,1\}$  a point~$\vec{x}$ in~$\set{X}(\vec{\tau})$ with $x_{i,j}=\varrho$. Thus, no face defined by a block inequality is contained in any face defined by a trivial inequality.
	
	Finally, let $\vec{\tau},\vec{\tau}'\in\{0,1,2,3\}^p$ be two arbitrary feasible vectors with $\set{X}(\vec{\tau})\subseteq\set{X}(\vec{\tau}')$ and suppose that $\vec{\tau}\ne\vec{\tau}'$ holds. With $i_{\max}=\max\setDef{i}{\tau_i\ne 0}$ and $i'_{\max}=\max\setDef{i}{\tau'_i\ne 0}$ we find $i_{\max}= i'_{\max}$, because otherwise there were vertices in $\set{X}(\vec{\tau})$ with critical row~$i'_{\max}$, thus not contained in $\set{X}(\vec{\tau}')$. In particular, for every $i\in[p]$ with $\tau'_i=0$ we have~$\tau_i=0=\tau'_i$.
	Furthermore,  observe that for every $i\in[p]$ with $\tau'_i\in\{1,2\}$ we must have $\tau_i=\tau'_i$ as well, because otherwise we can easily construct a vertex $\vec{x}\in\set{X}(\vec{\tau})\setminus\set{X}(\vec{\tau}')$ (with critical row~$i_c=p+1$).
	Finally, for every $i\in[p]$ with $\tau'_i=3$ we have $\tau_i=3$ as well, which follows since we have $\tau_{1}=3=\tau'_{1}$ by the definition of feasibility, and since, for $i>1$, we could easily construct some vertex (with critical row~$1$, note $i_{\max}=i'_{\max}\ge i>1$) in $\set{X}(\vec{\tau})\setminus\set{X}(\vec{\tau}')$ in case of $\tau_i\ne 3$.
 	\end{proof}

\section{Conclusions}

The examples worked out in this paper demonstrate  some cases in which it is  convenient to use the lifting method in order to find a linear description of a polytope from an appropriate extended formulation. Other examples where this technique has been used successfully include packing and partitioning orbitopes~\cite{FaenzaKaibel09}. We believe that the technique should be useful in many more situations, as it provides means to exploit knowledge about the vertices of the polytope to describe (e.g., when searching for a suitable lifting function) which seems to be difficult to exploit when working with the projection cone.

\paragraph*{Acknowledgements} We thank Matthias Peinhardt for many valuable discussions on the topic of the paper. 


\end{document}